\numberwithin{equation}{section}
\numberwithin{figure}{section}
\theoremstyle{plain}
\newtheorem {satz}{Theorem}[section]
\newtheorem {lemma}[satz]{Lemma}
\newtheorem {prop}[satz]{Proposition}
\theoremstyle{definition}
\newtheorem {deff}[satz]{Definition}
\theoremstyle{remark}
\newtheorem {remark}[satz]{Remark}
\begin{document}

\date{}

\title{\Large {\bf Partitioning the triangles of the cross polytope into surfaces}}

\author{Jonathan Spreer}

\maketitle

\subsection*{\centering Abstract}

{\em
	We present a constructive proof that there exists a decomposition of the $2$-skeleton of the $k$-dimensional cross polytope $\beta^k$ into closed surfaces of genus $g \leq 1$, each with a transitive automorphism group given by the vertex transitive $\mathbb{Z}_{2k}$-action on $\beta^k$. Furthermore we show that for each $k \equiv 1,5(6)$ the $2$-skeleton of the $(k-1)$-simplex is a union of highly symmetric tori and M\"obius strips.
}\\
\\
\textbf{MSC 2010: } {\bf 52B12}; 
52B70; 
57Q15;  
57M20; 
05C10; 
\\
\textbf{Keywords: } cross polytope, simplicial complexes, triangulated surfaces, difference cycles.

\section{Introduction}

It is an obvious fact that the $(k-1)$-simplex $\Delta^{k-1}$ contains all triangulations on $k$ vertices (possibly after a relabeling of the vertices). Hence, its {\em $i$-skeleton} $\operatorname{skel}_i(\Delta^{k-1})$, i.\ e.\ the set of all $i$-dimensional faces of $\Delta^{k-1}$, $i < k$, can be seen as the space of all triangulated $i$-manifolds with $m \leq k$ vertices.

One way of analyzing this space is to create partitions of $\operatorname{skel}_i(\Delta^{k-1})$ where every part fulfills certain conditions.

Of course, one could divide $\operatorname{skel}_i(\Delta^{k-1})$ into ${k \choose i+1}$ parts, each containing one $i$-simplex which is a topological ball and hence fulfills the constraint to be a bounded manifold. However, if we want to partition $\operatorname{skel}_i(\Delta^{k-1})$ into fewer parts, with the additional condition that, for example, each part is a bounded manifold, more restrictions apply. As a consequence, the partition becomes more meaningful regarding the structure of $\Delta^{k-1}$.

\medskip
In the centrally symmetric case, the $k$-dimensional {\em cross polytope}, i.\ e.\ the convex hull of $2k$ points $x_i^{\pm} = (0, \ldots , 0, \pm 1, 0, \ldots , 0) \in \mathbb{R}^k$, $1 \leq i \leq k$, is the space of all centrally symmetric triangulations, i.\ e.\  all triangulations having a symmetry of order $2$ without fixed points. Thus, partitions of the $i$-skeleton of $\beta^k$ which follow suitable constrains could give new insights into the class of centrally symmetric triangulations which forms an interesting family of triangulations, previously investigated by Gr\"unbaum \cite{Gruenbaum03ConvPoly}, Jockush \cite{Jockusch95InfFamNearNeighCentSymmS3}, K\"uhnel \cite{Kuhnel96CentrSymmTightSurf}, Effenberger and K\"uhnel \cite{Effenberger08CentSymmS2S27}, Lassmann and Sparla \cite{Lassmann00ClassCentSymmCycS2S2} and many others.

\medskip
However, a more general and systematic approach to this method of analyzing a polytope is to consider partitions of the $i$-skeleton of arbitrary (simplicial) polytopes, ordered by the size of the partition, say $l$. In the special case $l=2$, i.\ e.\ where the $i$-skeleton $\operatorname{skel}_{i} (P)$, $1 \leq i \leq (k-2)$, of a $k$-polytope $P$ can be partitioned into two (possibly bounded) PL $i$-manifolds $M_1$ and $M_2$ with $ M_1 \cup M_2 = \operatorname{skel}_{i} (P)$ such that $ M_1 \cap M_2 \subset \operatorname{skel}_{i-1} (P)$, the polytope $P$ is called {\em decomposable}.

For $i=1$, finding decomposable polytopes $P$ means decomposing the edge graph of $P$ into two (connected) graphs of degree at most $2$. This problem was solved by Gr\"unbaum and Malkevitch \cite{Grunbaum76PairsEdgeDisjHamCircs} as well as Martin \cite{Martin76CyclesHamiltoniens}.  

In the case $i=2$, the question was investigated by Betke, Schulz and Wills (see \cite{Betke76ZerlegSkel}) with the result that there exist only five polytopes which allow a decomposition of the set of triangles into two surfaces with boundary, namely the $4$-simplex, the $5$-simplex, the $4$-dimensional cross polytope, the $6$-vertex cyclic $4$-polytope and the double pyramid over the $3$-simplex. 

\medskip
It is fairly easy to agree that in general decompositions of a polytope $P$ with $l$ small are very restrictive towards the local combinatorial structure of $P$: Just counting the number of triangles sharing one common edge of $\beta^k$ leads to the conclusion that the $2$-skeleton of $\beta^k$ cannot be partitioned into less than $l = k-2$ surfaces.

On the other hand, a decomposition with not too many parts could be enlightening towards a deeper understanding of how skeletons of high co-dimension in a simplicial polytope are structured locally. For surfaces in the $k$-dimensional cross polytope this can be demonstrated by our main result.

\begin{satz}
\label{thm:main}
The $2$-skeleton of the $k$-dimensional cross polytope $\beta^k$ 
can be decomposed into triangulated vertex transitive closed surfaces.

More precisely, if $k \equiv 1,2 \, (3)$, $\operatorname{skel}_2 ( \beta^k )$ decomposes into $\frac{(k-1)(k-2)}{3}$ triangulated vertex transitive closed surfaces of Euler characteristic $0$ on $2k$ vertices and, if $k \equiv 0 \, (3)$, into $\frac{k}{3}$ disjoint copies of $\partial \beta^3$ (on $6$ vertices each) and $\frac{k(k-3)}{3}$ triangulated vertex transitive closed surfaces of Euler characteristic $0$ on $2k$ vertices.
\end{satz}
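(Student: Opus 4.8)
The plan is to work entirely inside the group $\mathbb{Z}_{2k}$, identifying the $2k$ vertices of $\beta^k$ with the elements of $\mathbb{Z}_{2k}$ so that the free vertex-transitive action is the translation $i \mapsto i+1$ and the central symmetry is $i \mapsto i+k$. Under this identification the faces of $\beta^k$ are exactly the subsets of $\mathbb{Z}_{2k}$ containing no antipodal pair $\{i,i+k\}$, and every $\mathbb{Z}_{2k}$-orbit of triangles is encoded by a difference cycle $(a:b:c)$ with $a+b+c=2k$ and $a,b,c\geq 1$, the orbit being $\{\,\{i,i+a,i+a+b\}\,\}$. The face condition becomes: none of $a,b,c$ equals $k$; and the orbit has the full length $2k$ unless $a=b=c$, which forces $3\mid k$ and produces the single short orbit $(\tfrac{2k}{3}:\tfrac{2k}{3}:\tfrac{2k}{3})$ of length $\tfrac{2k}{3}$. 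Thus $\operatorname{skel}_2(\beta^k)$ is partitioned by the set of all admissible difference cycles, and the entire problem reduces to grouping these difference cycles into blocks, each of which assembles into a vertex-transitive closed surface.

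I would next isolate the correct building block. Any union of full orbits is automatically vertex transitive, so a candidate surface is controlled entirely by the link of the single vertex $0$: the triangle $\{i,i+a,i+a+b\}$ passes through $0$ for exactly three values of $i$, so each difference cycle $(a:b:c)$ contributes precisely three edges to $\operatorname{lk}(0)$. A vertex-transitive closed surface with $2k$ vertices and Euler characteristic $0$ is $6$-regular (it has $4k$ triangles, $6k$ edges, and hexagonal vertex links), so such a surface should be the union of exactly two difference cycles. I would then check the two ways two difference cycles close up to a hexagonal link. First, a chiral difference cycle (with $a,b,c$ pairwise distinct) together with its mirror $(c:b:a)$ yields the six link-edges joining $\pm a,\pm b,\pm c$; these six values are distinct precisely because $a,b,c$ are pairwise distinct and none equals $k$, so the link is a genuine hexagon and every edge lies in exactly two triangles. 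Second, an achiral difference cycle $(\alpha:\alpha:2k-2\alpha)$ contributes a path rather than a cycle to the link, and I would verify that it closes up exactly against $(\beta:\beta:2k-2\beta)$ with $\beta=k-\alpha$: since $2\beta\equiv -2\alpha$, the two link-paths share both endpoints and glue into the hexagon on the six neighbours $\{\pm\alpha,\pm 2\alpha,k\pm\alpha\}$. Away from the exceptional coincidence treated below, each such pair is a $6$-regular closed surface of Euler characteristic $0$ on $2k$ vertices.

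With the building blocks in hand the assembly is a matching argument. Mirror-conjugation pairs up all chiral difference cycles, and the involution $\alpha\mapsto k-\alpha$ on $\{1,\dots,k-1\}$ pairs up the achiral ones; its only fixed point is $\alpha=k/2$, which is excluded because it would force an entry equal to $k$. This accounts for every difference cycle and yields $\tfrac{(k-1)(k-2)}{3}$ surfaces when $3\nmid k$, consistent with the triangle count $\tfrac{4k(k-1)(k-2)}{3}=4k\cdot\tfrac{(k-1)(k-2)}{3}$. The single place where the scheme breaks is $3\mid k$: here, writing $t=k/3$, the all-equal cycle $(2t:2t:2t)$ and the achiral cycle $(t:t:4t)$ are exactly the pair $\alpha=t,\ \beta=2t$ selected by the involution, but the would-be hexagon degenerates because $2t\equiv -t$ collapses two of the neighbours. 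I would treat this case directly, showing that these two orbits instead tile into $\tfrac{k}{3}$ pairwise disjoint regular octahedra $\partial\beta^3$, one on each coset of the subgroup $\langle t\rangle\cong\mathbb{Z}_6$ (each octahedron taking six triangles from $(t:t:4t)$ and two from $(2t:2t:2t)$), leaving $\tfrac{k(k-3)}{3}$ genuine surfaces.

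The main obstacle is this degenerate case together with the step that separates honest manifolds from mere pseudomanifolds, namely verifying that each link is a single hexagon rather than a union of shorter cycles. Concretely, I must confirm that no further coincidences among the six neighbours $\pm a,\pm b,\pm c$ (respectively $\{\pm\alpha,\pm 2\alpha,k\pm\alpha\}$) occur for any admissible triple or any admissible $\alpha\neq k/2,\,k/3$, and that the two exceptional short orbits are consumed entirely and only by the octahedra. The remaining bookkeeping — that mirror-conjugation and $\alpha\mapsto k-\alpha$ together meet every difference cycle exactly once, and that the surface counts come out as stated — is routine once the building blocks and the exceptional orbits are correctly identified.
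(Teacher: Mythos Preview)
Your proposal is correct and follows essentially the same strategy as the paper: the paper also encodes $\operatorname{skel}_2(\beta^k)$ by difference cycles in $\mathbb{Z}_{2k}$, pairs each chiral cycle $(l:j:2k{-}l{-}j)$ with its mirror $(l:2k{-}l{-}j:j)$ (its Lemma~\ref{lem:typeA}), pairs the achiral cycles via the involution $l\leftrightarrow k-l$ (its Lemmas~\ref{lem:BSeriesPartition}--\ref{lem:typeB}), checks that the vertex links are hexagons, and treats the $3\mid k$ degeneration into $\tfrac{k}{3}$ octahedra exactly as you outline. One small slip to fix: the coincidence collapsing the hexagon at $\alpha=k/3$ is $2\alpha=k-\alpha$ (equivalently $-2\alpha=k+\alpha$, i.e.\ $3\alpha=k$) in $\mathbb{Z}_{2k}$, not ``$2t\equiv -t$''.
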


The case $k=4$ equals one of the five decompositions already described by Betke, Schulz and Wills in \cite{Betke76ZerlegSkel}. It is worthwhile mentioning that Theorem \ref{thm:main} defines a decomposition of $\operatorname{skel}_2 ( \beta^k )$ not only into triangulated surfaces but into $0$-Hamiltonian closed surfaces with transitive automorphism group, i.\ e.\ each part of the partition contains $2k$ vertices and has an automorphism group of order at least $4k$ (a cyclic part with $2k$ vertices plus the centrally symmetric element). Hence, the space of all centrally symmetric triangulated surfaces - which is a highly symmetric space - can be partitioned into relatively large pieces of such surfaces while these surfaces in some cases consist of surprisingly many 
different combinatorial types (see Table \ref{tab:combTypes}). In fact, it follows from the construction of the partition that each centrally symmetric triangulated torus or Klein bottle with cyclic $\mathbb{Z}_{2k}$ symmetry occurs in this partition as a proper part.

\medskip
Under this point of view it is surprising that a similar partition of the $2$-skeleton of $\Delta^{k-1}$ fails to exist. Instead, it seems that only a partition into partly bounded triangulated surfaces is possible by virtue of the following theorem.

\begin{satz}
	\label{thm:deltak}
	Let $k>1$, $k \equiv 1,5(6)$. Then the $2$-skeleton of $\Delta^{k-1}$ decomposes into $\frac{k-1}{2}$ collections of M\"obius strips
	$$ M_{l,k} := \{ ( l : l : k - 2l ) \}, $$
	$1 \leq l \leq \frac{k-1}{2}$ each with $n:=\operatorname{gcd} (l,k)$ isomorphic connected components on $\frac{k}{n}$ vertices and $\frac{k^2 -6k +5}{12}$ collections of tori
	$$ S_{l,j,k} := \{ ( l : j : k - l - j ),( l : k-l-j : j ) \}, $$
	$ 1 \leq l < j < k - l - j$, with $m:=\operatorname{gcd} (l,j,k)$ connected components on $\frac{k}{m}$ vertices each.
\end{satz}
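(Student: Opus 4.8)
The plan is to work entirely in the language of difference cycles. Label the vertices of $\Delta^{k-1}$ by $\mathbb{Z}_k$ and recall that the $\mathbb{Z}_k$-orbits of triangles correspond bijectively to difference cycles $(a:b:c)$ with $a+b+c=k$ and $a,b,c\geq 1$, taken up to cyclic rotation but \emph{not} reversal, so that mirror images of a cycle with three distinct gaps give two distinct orbits. Since $k\equiv 1,5\,(6)$ we have $\gcd(k,6)=1$; in particular $3\nmid k$, so no cycle $(l:l:l)$ occurs, and $k$ is odd. First I would check that the listed collections exhaust every triangle orbit exactly once: normalising a cycle by its smallest entry, a cycle with three distinct gaps is a torus cycle (paired with its mirror in a unique $S_{l,j,k}$), a cycle with exactly two equal gaps is a Möbius cycle $M_{l,k}$, and the all-equal case is excluded by $3\nmid k$. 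A short count then shows there are $\frac{k-1}{2}$ Möbius collections, and that the number of triples $l<j<c$ with $l+j+c=k$ equals $\frac{k^2-6k+5}{12}$; together these reproduce the total number of orbits $\binom{k}{3}/k=\frac{(k-1)(k-2)}{6}$ (here $3\nmid k$ guarantees every orbit has full length $k$), which confirms that the collections form a partition.

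For the Möbius collections $M_{l,k}=\{(l:l:k-2l)\}$, the triangles are $\{i,i+l,i+2l\}$, $i\in\mathbb{Z}_k$. Two such triangles are adjacent precisely when their index sets lie in a common coset of $\langle l\rangle$, so the connected components are indexed by these cosets: there are $n=\gcd(l,k)$ of them, each on $k/n$ vertices, and they are mutually isomorphic under the residual $\mathbb{Z}_k$-action. Relabelling the vertices $0,l,2l,\dots$ of one component as $0,1,2,\dots$ identifies it with the cyclic band whose triangles are $\{t,t+1,t+2\}$, $t\in\mathbb{Z}_m$, $m=k/n$. A direct face count gives $\chi=m-2m+m=0$, with the ``long'' edges $\{t,t+2\}$ forming the boundary; since $k$ is odd, $m$ is odd, so these boundary edges form a \emph{single} cycle and each component is a Möbius strip rather than an annulus.

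For the torus collections $S_{l,j,k}=\{(l:j:c),(l:c:j)\}$, $c=k-l-j$, with $l<j<c$ distinct, I would first verify that the complex is a closed surface by an edge count. Because $l,j,c$ are distinct, no two of them sum to $k$ (positivity of the third), and $k$ odd forbids any gap equal to $k/2$, so every edge has an unambiguous gap in $\{l,j,c\}$ and each gap value occurs in exactly $k$ distinct edges. A short case analysis then shows each of the $3k$ edges lies in exactly two triangles, one from each orbit; hence the union is a closed surface. Its components are the cosets of $\langle l,j\rangle=\langle\gcd(l,j,k)\rangle$, giving $m=\gcd(l,j,k)$ isomorphic components, each on $k/m$ vertices, $3k/m$ edges and $2k/m$ triangles, so $\chi=0$. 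To rule out the Klein bottle I would exhibit an explicit global orientation: orient the triangle $\{i,i+l,i+l+j\}$ of $(l:j:c)$ as $[i,\,i+l,\,i+l+j]$ and the triangle $\{i,i+l,i+l+c\}$ of $(l:c:j)$ as $[i,\,i+l+c,\,i+l]$. Writing out the two triangles meeting along each gap-$l$, gap-$j$ and gap-$c$ edge and checking the induced directions shows every shared edge receives opposite orientations, so the orientation is consistent and every component is a torus.

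The counting identity and the face counts are routine; the genuine work is the two topological identifications. The main obstacle is the edge count for $S_{l,j,k}$ together with the orientability check: one must pin down, for each gap value, exactly which two triangles meet along a given edge and confirm there are no accidental coincidences, and this is precisely where $\gcd(k,6)=1$ is used — to exclude $(l:l:l)$, to keep the three gaps distinct and their edges unambiguous, and to force $m$ odd in the Möbius case. Once the adjacency pattern is written down explicitly, both the ``exactly two triangles per edge'' statement and the consistency of the proposed orientation follow by direct substitution.
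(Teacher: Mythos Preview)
Your outline follows essentially the same route as the paper: enumerate the difference cycles on $\mathbb{Z}_k$, pair off mirror cycles into the $S_{l,j,k}$, leave the symmetric cycles as the $M_{l,k}$, and identify each piece topologically via Euler characteristic, component count by $\gcd$, and an orientation argument. The counts and the M\"obius analysis are fine and match the paper's Lemma~4.1 and the final covering argument.

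There is one genuine gap in the torus step. From ``each of the $3k$ edges lies in exactly two triangles'' you conclude that $S_{l,j,k}$ is a closed \emph{surface}, but this edge condition only yields a closed $2$-pseudomanifold: it forces every vertex link to be a $2$-regular graph, hence a disjoint union of circles, not necessarily a single circle. With six triangles through each vertex, the link could a priori split as two $3$-cycles, giving a pinch point; your later $\chi=0$ and orientability checks do not exclude this. The paper closes exactly this gap by writing down the link of $0$ explicitly as the hexagon
\[
l,\; l{+}j,\; j,\; k{-}l,\; k{-}l{-}j,\; k{-}j,\; l
\]
and observing that the conditions $0<l<j<k-l-j$ and $k$ odd make all six labels distinct. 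Adding this one-line link check (or, equivalently, tracing that your two triangles through a gap-$l$ edge, two through a gap-$j$ edge, and two through a gap-$c$ edge really concatenate into a single $6$-cycle in the link) repairs the argument; after that your explicit orientation is equivalent to the paper's computation $\partial(l:j:c)=\partial(l:c:j)$ in $1$-dimensional difference cycles.
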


In order to proof our main results, we will first introduce some terms and methods to work with triangulations with transitive cyclic symmetry (cf.\ Section \ref{sec:cyclic}) before we will establish some useful lemma and a proof for  Theorem \ref{thm:main} (cf. Section \ref{sec:betak}) and Theorem \ref{thm:deltak} (cf. Section \ref{sec:deltak}).

\section{Simplicial complexes with cyclic symmetry}
\label{sec:cyclic}

Highly symmetric simplicial complexes $C$ allow a very efficient description by the generators of its automorphism group $\operatorname{Aut} (C)$ together with a system of orbit representatives of the complex under the action of $\operatorname{Aut} (C)$. In this way, all kinds of statements involving highly symmetric simplicial complexes often have elegant and easy to handle proofs.

In particular, this is true for complexes with transitive automorphism group, sometimes just called {\em transitive complexes}: As the automorphism group acts transitive on the set of vertices, all vertex links of a simplicial complex are {\em combinatorially isomorphic}, i.\ e.\ they are equal up to a relabeling of their vertices. Hence, a neighborhood of each vertex determines the complex globally and, as a consequence, global properties can be calculated locally.

\medskip
An important class of transitive complexes are the ones which contain a cyclic automorphism acting regular on the set of vertices. 

By a relabeling of the vertices to an integer labeling $0 , \ldots , n$, the cyclic group action can be represented by the vertex shift $v \mapsto v+1 \mod n$. In this way, the simplices of a complex can be divided into equivalence classes by simply calculating the differences between the vertex labels of each simplex and each of these equivalence classes defines a cyclic orbit. More precisely we have the following definition.

\begin{deff}[Difference cycle]
	\label{def:diffCycles}
	Let $a_i \in \mathbb{N} \setminus \{ 0 \}$, $0 \leq i \leq d$, $ n := \sum_{i=0}^{d} a_i$ and $\mathbb{Z}_n = \langle (0,1, \ldots , n-1) \rangle$. The simplicial complex
	$$ ( a_0 : \ldots : a_{d} ) := \mathbb{Z}_n \cdot \{0 , a_0 , \ldots , \Sigma_{i=0}^{d-1} a_i \}, $$
	where $\cdot$ is the induced cyclic $\mathbb{Z}_n$-action on subsets of $\mathbb{Z}_n$, is called {\em difference cycle of dimension $d$ on $n$ vertices}. The number of its elements is referred to as the {\em length} of the difference cycle. If a complex $C$ is a union of difference cycles of dimension $d$ on $n$ vertices and $\lambda$ is a unit of $\mathbb{Z}_n$ such that the complex $\lambda C$ (obtained by multiplying all vertex labels modulo $n$ by $\lambda$) equals $C$, then $\lambda$ is called a {\em multiplier} of $C$.
\end{deff}

Note that for any unit $\lambda \in \mathbb{Z}_n^{\times}$, the complex $\lambda C$ is combinatorially isomorphic to $C$. In particular, all $\lambda \in \mathbb{Z}_n^{\times}$ are multipliers of the complex $\bigcup_{\lambda \in \mathbb{Z}_n^{\times}} \lambda C$ by construction. 

The definition of a difference cycle above is similar to the one given in \cite{Kuehnel96PermDiffCyc}. For a more thorough introduction into the field of the more general difference sets and their multipliers see Chapter VI and VII in \cite{Beth99DesignTheory}.

Throughout this article we will look at difference cycles as simplicial complexes with a transitive automorphism group given by the cyclic $\mathbb{Z}_n$-action on its elements: Every $(d+1)$-tuple $\{ x_0 , \ldots , x_{d} \}$ is interpreted as a $d$-simplex $\Delta^d = \langle x_0, \ldots , x_{d} \rangle$.  A simplicial complex $C$ is called {\em transitive}, if its group of automorphisms acts transitively on the set of vertices. In particular, any union of difference cycles is a transitive simplicial complex.

\begin{remark}
	It follows from Definition \ref{def:diffCycles} that the set of difference cycles of dimension $d$ on $k$ vertices defines a partition of the $d$-skeleton of the $(k-1)$-simplex. Two $(d+1)$-tuples $(a_0 , \ldots , a_{d})$ and $(b_0 , \ldots , b_{d})$ with $\Sigma_{i=0}^{d} a_i = \Sigma_{i=0}^{d} b_i = k$ define the same difference cycle if and only if for a fixed $j \in \mathbb{Z}$ we have $a_{(i + j) \! \mod (d+1)} = b_i$ for all $0 \leq i \leq d$. 
\end{remark}

\begin{prop}
	\label{prop:lengthOfDiffcycles}
	Let $( a_0 : \ldots : a_{d} )$ be a difference cycle of dimension $d$ on $n$ vertices and $1 \leq k \leq d+1$ the smallest integer such that $k \mid (d+1)$ and $a_i = a_{i+k}$, $0 \leq i \leq d-k$. Then $( a_0 : \ldots : a_{d} )$ is of length $\sum_{i=0}^{k-1} a_i = \frac{nk}{d+1}.$
\end{prop}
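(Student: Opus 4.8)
The plan is to read off the length of $(a_0 : \ldots : a_d)$ directly as the cardinality of the $\mathbb{Z}_n$-orbit of its base simplex $S := \{0, a_0, a_0 + a_1, \ldots, \sum_{i=0}^{d-1} a_i\}$ and to apply the orbit--stabilizer theorem, so that the length equals $n / |\operatorname{Stab}_{\mathbb{Z}_n}(S)|$. All of the work then lies in identifying the stabilizer, and the statement amounts to showing it has order $\frac{d+1}{k}$.

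First I would set $v_j := \sum_{i=0}^{j-1} a_i$ for $0 \le j \le d$ (so $v_0 = 0$), giving $S = \{v_0, \ldots, v_d\}$ whose consecutive cyclic gaps $v_{j+1} - v_j$ are exactly $a_j$, with wrap-around gap $a_d$ since $\sum_{i=0}^d a_i = n$. An element $t \in \mathbb{Z}_n$ lies in $\operatorname{Stab}_{\mathbb{Z}_n}(S)$ if and only if the translation $x \mapsto x+t$ permutes $S$. Because translations on $\mathbb{Z}_n$ preserve the cyclic order of points, any such $t$ must rotate the vertex sequence: there is a unique $s \in \{0, \ldots, d\}$ with $v_j + t = v_{(j+s) \bmod (d+1)}$ for all $j$. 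Comparing consecutive gaps on both sides forces $a_j = a_{(j+s)\bmod(d+1)}$ for all $j$, i.e. $s$ is a period of the cyclic gap sequence.

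Next I would pin down which index-shifts $s$ occur. The set of periods of the cyclic sequence $(a_0, \ldots, a_d)$ is closed under addition and negation modulo $d+1$, hence is a subgroup of $\mathbb{Z}/(d+1)$ and is generated by its least positive element, which therefore divides $d+1$; by the minimality built into the hypothesis this generator is precisely $k$. Thus the admissible shifts are exactly $s \in \{0, k, 2k, \ldots, (m-1)k\}$ with $m := \frac{d+1}{k}$. For $s = ck$ the periodicity $a_i = a_{i+k}$ gives the corresponding translation $t = v_{ck} = \sum_{i=0}^{ck-1} a_i = c \sum_{i=0}^{k-1} a_i = c\,p$, where $p := \sum_{i=0}^{k-1} a_i$; conversely each such $c\,p$ genuinely stabilizes $S$ since $v_j + v_k = v_{(j+k)\bmod(d+1)}$ by the same periodicity. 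Hence $\operatorname{Stab}_{\mathbb{Z}_n}(S) = \{0, p, 2p, \ldots, (m-1)p\}$, and using periodicity once more $m\,p = \sum_{i=0}^d a_i = n$, so $p$ has order exactly $m$ and the stabilizer has order $m = \frac{d+1}{k}$.

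Finally, the orbit--stabilizer theorem gives length $= n/m = \frac{nk}{d+1}$, and since $p = n/m$ this equals $\sum_{i=0}^{k-1} a_i$ as well, yielding both stated forms. The step deserving the most care is the rigidity claim that every stabilizing translation must act as an honest rotation of the vertex cycle by a full period of the gap sequence; once that is in place the remainder is bookkeeping with the partial sums $v_j$.
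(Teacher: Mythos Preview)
Your proof is correct and follows essentially the same approach as the paper: both identify the length as the $\mathbb{Z}_n$-orbit of the base simplex and locate the stabilizing shift $m=\sum_{i=0}^{k-1}a_i=\frac{nk}{d+1}$. The paper verifies directly that translation by $m$ fixes the base simplex and then compresses the converse into the phrase ``since $k$ is minimal \ldots\ the upper bound is attained,'' whereas you spell out that converse by matching stabilizing translations with periods of the cyclic gap sequence---a welcome expansion of the step the paper leaves implicit, but not a different argument.
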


\begin{proof}
	We set $m:=\frac{nk}{d+1}$ and compute
	\small
	\begin{eqnarray}
		\left \langle 0 + m, a_0 + m, \ldots , (\Sigma_{i=0}^{d-1} a_i) + m \right \rangle &=& \left \langle \Sigma_{i=0}^{k-1} a_i , \Sigma_{i=0}^{k} a_i, \ldots , \Sigma_{i=0}^{d-1} a_i , 0 , a_1 , \ldots , \Sigma_{i=0}^{k-2} a_i \right \rangle \nonumber \\
	 &=& \left \langle 0 , a_0, \ldots , \Sigma_{i=0}^{d-1} a_i \right \rangle \nonumber
	\end{eqnarray}
	\normalsize
	(all entries are computed modulo $n$). Hence, for the length $l$ of $( a_0 : \ldots : a_{d} )$ we have $l \leq \frac{nk}{d+1}$ and since $k$ is minimal with $k \mid (d+1)$ and $a_i = a_{i+k}$, the upper bound is attained.
\end{proof}

\section{The decomposition of $\operatorname{skel}_2 (\beta^k)$ into closed cyclic surfaces}
\label{sec:betak}

In this section we will prove our main result Theorem \ref{thm:main}. 

\medskip
First of all let us mention that it will turn out to be very convenient to look at the boundary of the $k$-dimensional cross polytope in terms of the abstract simplicial complex

\begin{equation}
	\partial \beta^k = \{ \langle a_1 , \ldots , a_k \rangle \, | \, a_i \in \{ 0, \ldots , 2k-1 \}, \{ i , k+i \} \not \subseteq \{ a_1 , \ldots , a_k \},\, \forall \, 0 \leq i \leq k-1 \}.
\end{equation}

In particular, the diagonals of $\beta^k$ are precisely the edges $\langle i , k+i \rangle$, $1 \leq i \leq k$, and thus coincide with the difference cycle $(k\!:\!k)$. 

The proof of Theorem \ref{thm:main} itself will consist of an explicit construction of pairwise disjoint cyclic closed surfaces in the $2$-skeleton of the cross polytope $\beta^k$ as well as a proof of their topological types for any given integer $k \geq 3$.

However, let us first state a number of lemmata which will be helpful in the following.

\begin{lemma}
\label{lem:partBetaK}
The $2$-skeleton of $\beta^k$ can be written as the following set of difference cycles:
$$ (l : j : 2k-l-j) , (l : 2k-l-j : j) $$
for $0 < l < j < 2k-l-j$, $k \not \in \{ l,j,l + j \}$, and
$$ (j : j : 2(k-j)) $$
for $0 < j < k $ with $2j \neq k$. If $k \not\equiv 0 \, (3)$ all of them are of length $2k$, if $k \equiv 0\, (3)$ the difference cycle $(\frac{2k}{3} : \frac{2k}{3} : \frac{2k}{3}) $ has length $\frac{2k}{3}$.
\end{lemma}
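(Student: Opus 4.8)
The plan is to view $\operatorname{skel}_2(\beta^k)$ as the full $2$-skeleton of $\Delta^{2k-1}$ with all diagonal-containing triangles removed, and to show that this removal respects the partition into difference cycles. By the remark following Definition \ref{def:diffCycles}, the difference cycles of dimension $2$ on $2k$ vertices partition $\operatorname{skel}_2(\Delta^{2k-1})$, so it suffices to identify which of them survive in $\partial\beta^k$. From the description of $\partial\beta^k$ above, a triangle $\langle x,y,z\rangle$ fails to lie in $\beta^k$ exactly when two of its vertices are antipodal, i.e.\ differ by $k$ modulo $2k$; equivalently, one of its three pairwise vertex differences equals $k$. For a triangle $\langle v, v+a_0, v+a_0+a_1\rangle$ belonging to the difference cycle $(a_0:a_1:a_2)$ (with $a_0+a_1+a_2=2k$) these three differences are precisely $a_0$, $a_1$ and $a_2=2k-a_0-a_1$ modulo $2k$. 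Hence the property of containing a diagonal depends only on the difference cycle and not on the individual triangle -- consistent with the fact that the diagonals form the $\mathbb{Z}_{2k}$-invariant set $(k\!:\!k)$ -- so deleting diagonal-containing triangles amounts to deleting entire difference cycles. Therefore $\operatorname{skel}_2(\beta^k)$ is exactly the union of those dimension-$2$ difference cycles on $2k$ vertices no part of which equals $k$.

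It then remains to enumerate these difference cycles in canonical form, which I would do according to the multiplicities of the three parts. If all three parts are distinct, I pick the representative with smallest entry first and largest entry last, giving the range $0<l<j<2k-l-j$; since a difference cycle is invariant only under cyclic rotation, each such unordered triple yields two distinct cycles, namely $(l:j:2k-l-j)$ and its reverse $(l:2k-l-j:j)$ (these coincide precisely when two parts are equal, so here they are genuinely different). The requirement that no part equal $k$ becomes $k\notin\{l,j,l+j\}$; note that $l<\tfrac{2k}{3}$ and $j<k$ hold automatically, so the only binding condition is $l+j\neq k$. If exactly two parts are equal, the reverse of the cycle coincides with itself, so each multiset $\{j,j,2(k-j)\}$ contributes the single cycle $(j:j:2(k-j))$ with $0<j<k$, and forbidding the part $2(k-j)=k$ gives the exclusion $2j\neq k$. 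The all-equal case $a_0=a_1=a_2=\tfrac{2k}{3}$ occurs only when $3\mid k$ and is already recorded as the subcase $j=\tfrac{2k}{3}$ of the second family. This reproduces exactly the two lists in the statement.

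Finally I would read off the lengths from Proposition \ref{prop:lengthOfDiffcycles} with $d=2$, so that the period must divide $d+1=3$. For any cycle that is not all-equal the smallest admissible period is $3$, giving length $\tfrac{2k\cdot 3}{3}=2k$; when $3\mid k$ the exceptional cycle $(\tfrac{2k}{3}:\tfrac{2k}{3}:\tfrac{2k}{3})$ has period $1$ and hence length $\tfrac{2k}{3}$. This matches the claimed lengths in both congruence classes. I expect the only delicate point to be the canonical-form bookkeeping in the enumeration step: one must argue cleanly that distinct parts give two cyclic orbits while repeated parts give one, and verify that the superficially three-fold exclusion $k\notin\{l,j,l+j\}$ (respectively $2j\neq k$) is precisely equivalent to ``no part equals $k$''; the topological content is light, and everything else is a direct application of the results already established.
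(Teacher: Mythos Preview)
Your proposal is correct and follows essentially the same approach as the paper: both identify that a difference cycle $(a:b:c)$ lies in $\operatorname{skel}_2(\beta^k)$ if and only if no part equals $k$, enumerate the surviving cycles according to whether the parts are pairwise distinct or not, and invoke Proposition~\ref{prop:lengthOfDiffcycles} with $d=2$, $n=2k$ for the lengths. Your write-up is in fact more explicit than the paper's in justifying the canonical-form bookkeeping (why distinct parts yield two cycles and repeated parts yield one) and in observing that the constraints $l\neq k$ and $j\neq k$ are automatic under $0<l<j<2k-l-j$.
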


\begin{proof}
	Let $\beta^k$ be the $k$-dimensional cross polytope with vertices $\{ 0 , \ldots , 2k-1 \}$ and diagonals $\{ j, k+j \}$, $0 \leq j \leq k-1$ . It follows from the recursive construction of $\beta^k$ as the double pyramid over $\beta^{k-1}$ that it contains all $3$-tuples of vertices as triangles except the ones including a diagonal. Thus, a difference cycle of the form $(a : b : c )$ lies in $\operatorname{skel}_2 (\beta^k)$ if and only if $k \not \in \{ a,b,a+b \}$ . In particular, $\operatorname{skel}_2 (\beta^k)$ is a union of difference cycles.
	
	Note that each ordered $3$-tuple $0 < l < j < 2k - l - j$ defines exactly two distinct difference cycles on the set of $2k$ vertices, namely
	$$ (l : j : 2k-l-j) \textrm{ and } (l : 2k-l-j : j) $$
	and it follows immediately that there is no other difference cycle $(a : b : c)$, $ k \notin \{ a,b,a+b \}$ on $2k$ vertices with $a,b,c$ pairwise distinct.
	
	For any positive integer $0 < j < k$ with $2j \neq k$ there is exactly one difference cycle
	$$ (j : j : 2k-2j)), $$
	and since $j$ must fulfill $0 < 2j < 2k$, there are no further difference cycles without diagonals with at most two different entries.
	
	The length of the difference cycles follows directly from Proposition \ref{prop:lengthOfDiffcycles} with $d=2$ and $n=2k$.
\end{proof}

\begin{lemma}
	\label{lem:euler}
	A closed $2$-dimensional pseudomanifold $S$ defined by $m$ difference cycles of full length on the set of $n$ vertices has Euler characteristic $\chi (S) = (1-\frac{m}{2})n$.
\end{lemma}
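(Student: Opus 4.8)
The plan is to compute the Euler characteristic directly from the face numbers $f_0, f_1, f_2$ of $S$, using the closed-pseudomanifold hypothesis to pin down the number of edges. The whole argument hinges on one fact: since $S$ is a \emph{closed} $2$-dimensional pseudomanifold, every edge is contained in exactly two triangles, and once the triangle count is known this determines $f_1$ by double counting.

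First I would count the triangles. For $d = 2$ the only divisors of $d+1 = 3$ are $1$ and $3$, so by Proposition \ref{prop:lengthOfDiffcycles} a difference cycle of dimension $2$ on $n$ vertices is either of length $\frac{n}{3}$ (precisely when all three entries agree) or of full length $n$. Since $S$ is built from $m$ difference cycles of full length, and distinct difference cycles are disjoint (they partition the triangles, as noted after Definition \ref{def:diffCycles}), this gives $f_2 = mn$.

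Next I would fix $f_0$ and $f_1$. The vertex count is immediate: the $\mathbb{Z}_n$-action is transitive on $\{0, \ldots, n-1\}$, and already the first entry of the orbit of a representative $\{0, a_0, a_0 + a_1\}$ ranges over all of $\mathbb{Z}_n$, so provided $m \geq 1$ every vertex occurs in some triangle and $f_0 = n$. For the edges I would count incident (edge, triangle) pairs in two ways. Each triangle contributes three edges, so the number of such incidences is $3 f_2 = 3mn$; on the other hand the closed-pseudomanifold condition forces each edge to lie in exactly two triangles, so this number also equals $2 f_1$. Hence $f_1 = \frac{3mn}{2}$.

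Putting the three counts together yields
$$ \chi(S) = f_0 - f_1 + f_2 = n - \frac{3mn}{2} + mn = \left(1 - \frac{m}{2}\right) n, $$
as claimed. There is no genuine obstacle here; the only point that requires care is that the identity $2 f_1 = 3 f_2$ rests squarely on $S$ being closed, so that no edge sits on a boundary and every edge meets exactly two triangles. As a consistency check, this also forces $3mn$ to be even, matching the requirement that $f_1$ be an integer.
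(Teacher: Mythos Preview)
Your proof is correct and follows essentially the same route as the paper: count $f_2 = mn$ from the full-length assumption, take $f_0 = n$, derive $f_1 = \tfrac{3}{2}mn$ from the closed-pseudomanifold condition $2f_1 = 3f_2$, and compute $\chi = n - \tfrac{3}{2}mn + mn$. The paper's own argument is the same computation stated in two sentences; your version simply spells out the justification for each face count more carefully.
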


\begin{proof}
	Since all difference cycles are of full length, $S$ consists of $n$ vertices and $m \cdot n$ triangles. Additionally, the pseudo manifold property asserts that $S$ has $\frac{3}{2} m \cdot n$ edges and thus 
	$$  \chi (S) = n - \frac{3}{2} m \cdot n + m \cdot n = n(1 - \frac{m}{2}).$$
\end{proof}

\begin{lemma}
	\label{lem:typeA}
	Let $0 < l < j < 2k-l-j$, $k \not \in \{ l,j,l+j \}$ and $m:=\operatorname{gcd}(l,j,2k)$. Then
	$$ S_{l,j,2k} := \{ (l : j : 2k-l-j) , (l : 2k-l-j : j) \} \cong \{ 1 , \ldots , m \} \times \mathbb{T}^2, $$ 
	where all connected components of $S_{l,j,2k}$ are combinatorially isomorphic to each other.
\end{lemma}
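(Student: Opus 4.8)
The plan is to show that each $S_{l,j,2k}$ is a disjoint union of closed surfaces, to count its connected components and pin down their Euler characteristic, and finally to decide orientability; since a connected closed orientable surface of Euler characteristic $0$ is the torus, this yields the claim. I would write the two difference cycles as $D_1 = (l:j:2k-l-j)$ with triangles $T_x = \langle x, x+l, x+l+j\rangle$ and $D_2 = (l:2k-l-j:j)$ with triangles $U_x = \langle x, x+l, x-j\rangle$, $x \in \mathbb{Z}_{2k}$. Because the three entries of each cycle are pairwise distinct, Proposition \ref{prop:lengthOfDiffcycles} gives that both cycles have full length $2k$, so $S$ has $2k$ vertices and $4k$ triangles.

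First I would establish that $S$ is a closed surface. Both $D_1$ and $D_2$ produce only edges of the three difference classes $l$, $j$ and $l+j$, and the hypotheses $0<l<j<2k-l-j$ and $k\notin\{l,j,l+j\}$ force these three classes to be distinct and each class to consist of $2k$ distinct edges (the excluded coincidences, e.g.\ $2l+j\equiv 0$ or $l+2j\equiv 0 \pmod{2k}$, are ruled out by the strict inequalities, and $l=k$, $j=k$, $l+j=k$ by the displayed exclusion). A triangle count then shows that inside $D_1$ (resp.\ $D_2$) every edge lies in exactly one triangle, and since $D_1$ and $D_2$ have the same edge set, every edge of $S$ lies in exactly one $T_x$ and one $U_{x'}$; hence $S$ is a closed pseudomanifold. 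To upgrade this to a manifold I would compute the link of the vertex $0$: collecting the six triangles through $0$ shows its link is the hexagon on the six vertices $\pm l,\ \pm j,\ \pm(l+j)$, which are pairwise distinct by the same case analysis, so every vertex link is a single circle.

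Next I would determine the components. Two vertices lie in the same component precisely when their difference lies in the subgroup $\langle l, j\rangle = \langle m\rangle \leq \mathbb{Z}_{2k}$ with $m = \operatorname{gcd}(l,j,2k)$; this subgroup has exactly $m$ cosets, each of size $\frac{2k}{m}$, so $S$ has $m$ components on $\frac{2k}{m}$ vertices. The shift $v \mapsto v+1$ is a combinatorial automorphism of $S$ permuting these cosets cyclically, so all $m$ components are combinatorially isomorphic. By Lemma \ref{lem:euler} (two difference cycles on $2k$ vertices) the total Euler characteristic is $0$, whence each component has Euler characteristic $0$ as well.

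The main obstacle is orientability, i.e.\ ruling out the Klein bottle. I plan to orient the triangles by the ordered tuples $T_x = (x, x+l, x+l+j)$ and $U_x = (x+l, x, x-j)$ and to verify that the two triangles meeting along each edge induce opposite orientations on it. It suffices to check this once for each of the three edge classes: the $l$-edge $\{x,x+l\}$ is shared by $T_x$ and $U_x$, the $j$-edge $\{x+l, x+l+j\}$ by $T_x$ and $U_{x+l+j}$, and the $(l+j)$-edge $\{x, x+l+j\}$ by $T_x$ and $U_{x+j}$; in each case a short computation shows the induced orientations cancel. Thus $S$ is an orientable closed surface, and each of its $m$ isomorphic components is a connected closed orientable surface of Euler characteristic $0$, i.e.\ a torus, giving $S_{l,j,2k} \cong \{1,\ldots,m\}\times\mathbb{T}^2$.
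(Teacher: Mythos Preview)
Your proof is correct and follows essentially the same route as the paper: verify that the link of $0$ is a hexagon to get a closed surface, invoke Lemma~\ref{lem:euler} for $\chi=0$, check orientability by a compatible choice of triangle orientations (the paper phrases this equivalently as $\partial(l:j:2k-l-j)-\partial(l:2k-l-j:j)=0$ in $1$-dimensional difference cycles), and count components via the subgroup $\langle l,j\rangle=\langle m\rangle\le\mathbb{Z}_{2k}$, with the shift $v\mapsto v+1$ showing they are all isomorphic.
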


\begin{proof}
	The link of vertex $0$ in $S_{l,j,2k}$ is equal to the cycle 
	
	\begin{center}
		\includegraphics[width=0.38\textwidth]{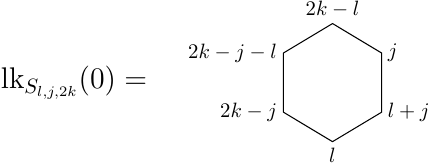}
	\end{center}
	
	Since $ 0 < l < j < 2k-l-j$ and $k \not \in \{ l,j,l+j \}$, all vertices are distinct and $\operatorname{lk}_{S_{l,j,2k}} (0)$ is the boundary of a hexagon. By the vertex transitivity all other links are also hexagons and $S_{l,j,2k}$ is a surface.
	
	Since $l$, $j$ and $2k-l-j$ are pairwise distinct, both $(l : j : 2k-l-j)$ and $(l : 2k-l-j : j)$ have full length and by Lemma \ref{lem:euler} the surface has Euler characteristic $0$.  
	
	In order to see that $S_{l,j,2k}$ is oriented, we look at the (oriented) boundary of the triangles in $S_{l,j,2k}$ in terms of $1$-dimensional difference cycles:
	\begin{eqnarray}
	\partial ( l : j : 2k - l - j ) &=& (j : 2k - j) - (l+j : 2k - l - j) + (l : 2k - l) \nonumber \\
	\partial ( l : 2k - l - j : j ) &=& (2k - l - j : l+j) - (2k-j : j) + (l : 2k - l) \nonumber \\
	&=& (j : 2k - j) - (l+j : 2k - l - j) + (l : 2k - l) \nonumber 
	\end{eqnarray}
	and thus $\partial ( l : j : 2k - l - j ) - \partial ( l : 2k - l - j : j ) = 0$ and $S_{l,j,2k}$ is oriented.
	
	Now consider
	$$ ( l : j : 2k-l-j ) = \mathbb{Z}_{2k} \cdot \langle 0 , l , l+j \rangle $$
	Clearly, $\langle (0+i)\! \mod 2k, (l+i)\! \mod 2k, (l+j+i)\! \mod 2k \rangle$ share at least one vertex if $i \in \{0,l,2k-l,j,2k-j,2k-l-j,l+j \}$. For any other value of $i < 2k$, the intersection of the triangles is empty. By iteration it follows that $( l : j : 2k-l-j ) $ has exactly $\operatorname{gcd}(0,l,2k-l,j,2k-j,2k-l-j,l+j) = \operatorname{gcd}(l,j,2k) = m$ connected components. The same holds for $( l : 2k-l-j : j ) $. The complex $(0, \ldots , (2k-1))^i \cdot \left \langle 0, l, l+j \right \rangle$ is disjoint to $\left \langle 0, l, 2k-j \right \rangle$ for $i \notin \{0,l,2k-l,j,2k-j,2k-l-j,l+j \}$. Together with the fact that $\operatorname{star}_{S_{l,j,2k}} (0)$ consists of triangles of both $( l : j : 2k-l-j )$ and $( l : 2k-l-j : j )$ it follows that $S_{l,j,2k}$ has $m$ connected components and by a shift of the indices one can see that all of them must be combinatorially isomorphic. Altogether it follows that $S_{l,j,2k} \cong \{ 1 , \ldots , m \} \times \mathbb{T}^2$.
\end{proof}

\begin{remark}
Some of the connected components of the surfaces presented above are combinatorially isomorphic to the so-called Altshuler tori
$$ \{ (1 : n-3 : 2) , (1 : 2 : n-3) \} $$
with $n = \frac{2k}{m} \geq 7$ vertices (cf. proof of Theorem $4$ in \cite{Altshuler71PolyhedralRealizationsTori}). However, other triangulations of transitive tori are part of the decomposition as well: in the case $k=6$, there are four different combinatorial types of tori. This is in fact the total number of combinatorial types of transitive tori on $12$ vertices (cf. Table \ref{tab:listBeta}). The number of distinct combinatorial types of centrally symmetric transitive tori for $k\leq 30$ is listed in Table \ref{tab:combTypes}.
\begin{figure}
 \begin{center}
	\begin{tabular}{|l|c||l|c||l|c||l|c|}
		\hline
		$k$ & $\#$ comb. types & $k$ & $\#$ comb. types & $k$ & $\#$ comb. types & $k$ & $\#$ comb. types \\
		\hline
		\hline
		$3$ & $0$ & $4$ & $1$ & $5$ & $1$ & $6$ & $4$ \\
		\hline
		$7$ & $2$ & $8$ & $3$ & $9$ & $4$ & $10$ & $6$ \\
		\hline
		$11$ & $4$ & $12$ & $9$ & $13$ & $5$ & $14$ & $8$ \\
		\hline
		$15$ & $11$ & $16$ & $7$ & $17$ & $7$ & $18$ & $12$ \\
		\hline
		$19$ & $8$ & $20$ & $13$ & $21$ & $15$ & $22$ & $12$ \\
		\hline
		$23$ & $10$ & $24$ & $17$ & $25$ & $13$ & $26$ & $14$ \\
		\hline
		$27$ & $16$ & $28$ & $17$ & $29$ & $13$ & $30$ & $26$ \\
		\hline
	\end{tabular}
 \end{center}
 \caption{Number of combinatorially distinct types of centrally symmetric transitive tori in $\beta^k$, $k\leq 30$. \label{tab:combTypes}}
\end{figure}
\bigskip
\end{remark}

\begin{remark}
	All centrally symmetric transitive surfaces ({\it cst-surfaces} for short) $S_{l,j,2k}$ from Lemma \ref{lem:typeA} can be constructed using the function \texttt{SCSeriesCSTSurface(l,j,2k)} from the \texttt{GAP}-package \textsf{simpcomp} \cite{simpcomp,simpcompISSAC}, maintained by Effenberger and the author. If the second parameter is not provided (\texttt{SCSeriesCSTSurface(l,2k)}), the surface $S_{l,2k}$ from Lemma \ref{lem:typeB} is generated. 
\end{remark}

\begin{lemma}
	\label{lem:BSeriesPartition}
	Let
	\small
	\begin{eqnarray}
		M &:=& \bigg\{  (j : j : 2(k-j)) \mid 0 < j < k ; 2j \neq k \bigg\}, \nonumber \\
		M_1 &:=& \left \{  (l : l : 2(k-l)) \mid 1 \leq l \leq \left \lfloor \frac{k-1}{2} \right \rfloor \right \} \textrm{ and} \nonumber \\
		M_2 &:=& \left \{  (k-l : k-l : 2l)) \mid 1 \leq l \leq \left \lfloor \frac{k-1}{2} \right \rfloor \right \}. \nonumber
	\end{eqnarray}
	\normalsize
	For all $k \geq 3$ the triple $(M,M_1,M_2)$ defines a partition
	$$ M = M_1 \dot{\cup} M_2 $$
	into two sets of equal size. In particular, we have $\mid M \mid \equiv 0 \, (2)$.
\end{lemma}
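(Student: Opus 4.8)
The plan is to translate this set-theoretic claim into elementary bookkeeping about the single index that names each difference cycle in $M$. Every element of $M$ has the shape $(j:j:2(k-j))$ with $0<j<k$ and $2j\neq k$, so the first thing I would do is show that the assignment $j \mapsto (j:j:2(k-j))$ is injective on this index range. Granting this, $M$, $M_1$ and $M_2$ are each faithfully encoded by their sets of admissible indices $j$, and the three assertions (equal size, disjointness, covering) all reduce to a comparison of three explicit subsets of $\{1,\dots,k-1\}$.

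For the injectivity I would argue that the multiset of the three gap entries, here $\{j,j,2(k-j)\}$, is invariant under the cyclic rotation identifying tuples that define the same difference cycle (cf.\ the Remark following Definition \ref{def:diffCycles}), and hence is a genuine invariant of the difference cycle. From this multiset one can always recover $j$: when $3j\neq 2k$ it is the entry of multiplicity two, and when $3j=2k$ all three entries equal $\tfrac{2k}{3}=j$. Either way $j$ is uniquely determined, so distinct indices yield distinct difference cycles. I then rewrite the members of $M_2$ in the same canonical form: since $(k-l:k-l:2l)=(j:j:2(k-j))$ with $j=k-l$, the set $M_2$ is indexed by $j\in\{\,k-l : 1\le l\le\lfloor\frac{k-1}{2}\rfloor\,\}=\{k-\lfloor\frac{k-1}{2}\rfloor,\dots,k-1\}$, while $M_1$ is indexed by $j\in\{1,\dots,\lfloor\frac{k-1}{2}\rfloor\}$ and $M$ by $\{1,\dots,k-1\}\setminus\{\tfrac{k}{2}\}$.

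It then remains only to compare these three index sets, which I would do by splitting on the parity of $k$. The two intervals for $M_1$ and $M_2$ are disjoint because $2\lfloor\frac{k-1}{2}\rfloor<k$, and together they cover $M$'s index set because the top of $M_1$'s interval, $\lfloor\frac{k-1}{2}\rfloor$, and the bottom of $M_2$'s interval, $k-\lfloor\frac{k-1}{2}\rfloor$, are consecutive integers straddling exactly the value $\tfrac{k}{2}$ (which is an integer, hence really excluded from $M$, precisely when $k$ is even). This yields $M=M_1\,\dot{\cup}\,M_2$ with $|M_1|=|M_2|=\lfloor\frac{k-1}{2}\rfloor$, whence $|M|=2\lfloor\frac{k-1}{2}\rfloor\equiv 0\ (2)$.

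I expect no serious obstacle; the only step demanding care is the injectivity claim, since a priori one must rule out an accidental coincidence between some $A_l=(l:l:2(k-l))$ and some $B_{l'}=(k-l':k-l':2l')$ produced by a cyclic rotation, as well as the behaviour near the degenerate index $j=\tfrac{2k}{3}$ (when $3\mid k$) and the excluded index $j=\tfrac{k}{2}$. The multiset-of-gaps invariant disposes of all of these uniformly, so once it is established the remaining verification is pure interval arithmetic.
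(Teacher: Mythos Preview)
Your proposal is correct and follows essentially the same route as the paper: both reduce the claim to checking that the index intervals $\{1,\dots,\lfloor\tfrac{k-1}{2}\rfloor\}$ and $\{k-\lfloor\tfrac{k-1}{2}\rfloor,\dots,k-1\}$ are disjoint and together exhaust $\{1,\dots,k-1\}\setminus\{\tfrac{k}{2}\}$, handled by a parity split. The only difference is that you make the injectivity of $j\mapsto(j:j:2(k-j))$ explicit via the multiset-of-gaps invariant, whereas the paper's disjointness argument (``$2l<k<2(k-l)$'') uses this implicitly without spelling it out.
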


\begin{proof}
	From $1 \leq l \leq \lfloor \frac{k-1}{2} \rfloor$ it follows that $k-l > l$ and $2l < k < 2(k-l)$. Thus, $ M_1 \cap M_2 = \emptyset $ and $ M_1 \cup M_2 \subseteq M$.
	
	On the other hand let $\lfloor \frac{k-1}{2} \rfloor < j < k-\lfloor \frac{k-1}{2} \rfloor$. If $k$ is odd, then $\frac{k-1}{2} < j < \frac{k+1}{2}$ which is impossible for $j \in \mathbb{N}$. If $k$ is even, then $\frac{k}{2}-1 < j < \frac{k}{2}+1$, hence it follows that $j=k$ which is excluded in the definition of $M$. Altogether $ M_1 \cup M_2 = M$	holds and 
	$$|M| = 2 \left \lfloor \frac{k-1}{2} \right \rfloor = \left \{ \begin{array}{ll} k-1 & \textrm{ if } k \textrm{ is odd} \\ k-2 & \textrm{ else.} \end{array} \right .$$
\end{proof}

\begin{lemma}
	\label{lem:typeB}
	The complex 
	$$ S_{l,2k} := \{  (l : l : 2(k-l)), (k-l : k-l : 2l) \}, $$
	$1 \leq l \leq \lfloor \frac{k-1}{2} \rfloor $, is a disjoint union of $\frac{k}{3}$ copies of $\partial \beta^3$ if $3 \mid k$ and $l = \frac{k}{3}$ and a surface of Euler characteristic $0$ otherwise.
\end{lemma}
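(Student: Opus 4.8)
The plan is to follow the strategy of Lemma~\ref{lem:typeA}: first determine $\operatorname{lk}_{S_{l,2k}}(0)$ explicitly, then read off the topology, distinguishing the generic situation from the degenerate one $3\mid k$, $l=\tfrac{k}{3}$. Writing $(l:l:2(k-l))=\mathbb{Z}_{2k}\cdot\langle 0,l,2l\rangle$ and $(k-l:k-l:2l)=\mathbb{Z}_{2k}\cdot\langle 0,k-l,2k-2l\rangle$, the three triangles of each cycle through $0$ contribute the link edges $\langle l,2l\rangle,\langle l,2k-l\rangle,\langle 2k-2l,2k-l\rangle$ and $\langle k-l,2k-2l\rangle,\langle k-l,k+l\rangle,\langle 2l,k+l\rangle$ respectively. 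Each of the six labels $l,2l,k-l,k+l,2k-2l,2k-l$ then has degree $2$, and tracing the edges shows they close up into a single closed walk through $l,2l,k+l,k-l,2k-2l,2k-l$. The crucial elementary step is to decide when these six labels are pairwise distinct: using $1\le l\le\lfloor\tfrac{k-1}{2}\rfloor$, hence $2l<k$, a routine check of all pairwise congruences modulo $2k$ shows the only possible coincidence is $2l\equiv k-l$ (equivalently $k+l\equiv 2k-2l$), i.e.\ $3l=k$. So the labels are distinct unless $3\mid k$ and $l=\tfrac{k}{3}$.

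In the generic case the link is a genuine hexagon, so by vertex transitivity $S_{l,2k}$ is a closed surface. Moreover $(l:l:2(k-l))$ fails to have full length only if $3l=2k$ and $(k-l:k-l:2l)$ only if $3l=k$, both excluded by $2l<k$ and $l\ne\tfrac{k}{3}$; hence $S_{l,2k}$ is made of $m=2$ full-length difference cycles on $n=2k$ vertices and Lemma~\ref{lem:euler} gives $\chi(S_{l,2k})=(1-\tfrac{2}{2})\cdot 2k=0$, as claimed.

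In the degenerate case $l=\tfrac{k}{3}$ the cycles become $(\tfrac{k}{3}:\tfrac{k}{3}:\tfrac{4k}{3})$, still of full length $2k$, and $(\tfrac{2k}{3}:\tfrac{2k}{3}:\tfrac{2k}{3})$, of length $\tfrac{2k}{3}$ by Proposition~\ref{prop:lengthOfDiffcycles}; the six link labels collapse to $\tfrac{k}{3},\tfrac{2k}{3},\tfrac{4k}{3},\tfrac{5k}{3}$ and the same bookkeeping yields a single $4$-cycle, so $S_{\frac{k}{3},2k}$ is again a closed surface. Since every triangle of both cycles has all pairwise vertex differences divisible by $\tfrac{k}{3}$, each connected component lies in one coset of the order-$6$ subgroup $\langle\tfrac{k}{3}\rangle\cong\mathbb{Z}_6$ of $\mathbb{Z}_{2k}$; there are $\tfrac{2k}{6}=\tfrac{k}{3}$ such cosets, and the $2k+\tfrac{2k}{3}=\tfrac{8k}{3}$ triangles are distributed evenly, i.e.\ $8$ per coset. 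As $S_{\frac{k}{3},2k}\subset\operatorname{skel}_2(\beta^k)$ contains no diagonal $\langle i,i+k\rangle$, and on a $6$-point set the triangles avoiding three fixed diagonal pairs are exactly the $8$ faces of $\partial\beta^3$, each coset must carry precisely these faces; hence every component is a copy of $\partial\beta^3$ and $S_{\frac{k}{3},2k}$ is a disjoint union of $\tfrac{k}{3}$ of them.

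The step I expect to be the real obstacle is not the Euler-characteristic count, which is immediate from Lemma~\ref{lem:euler}, but the case analysis pinning down exactly when the six link labels coincide, and then making the coset-and-length bookkeeping in the special case airtight so that each component comes out as precisely $8$ triangles, forcing it to be $\partial\beta^3$.
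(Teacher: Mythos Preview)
Your proof is correct and follows the same overall strategy as the paper: compute $\operatorname{lk}_{S_{l,2k}}(0)$ explicitly, verify it is a hexagon (resp.\ a quadrilateral when $l=\tfrac{k}{3}$), and invoke Lemma~\ref{lem:euler} on two full-length difference cycles to obtain $\chi=0$ in the generic case. Your case analysis for the coincidences among the six link labels is exactly what the paper's picture encodes, and your full-length check ($3l=2k$ resp.\ $3l=k$) is correct.

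The only place you diverge from the paper is in finishing the degenerate case $l=\tfrac{k}{3}$. The paper computes $\chi(S_{\frac{k}{3},2k})=\tfrac{2k}{3}$ directly from the face numbers, then argues (in analogy with Lemma~\ref{lem:typeA}) that there are $\gcd(l,2k)=\tfrac{k}{3}$ isomorphic components, hence each has $\chi=2$ and vertex degree~$4$, forcing $\partial\beta^3$. You instead observe that every triangle lies in a single coset of the order-$6$ subgroup $\langle\tfrac{k}{3}\rangle\le\mathbb{Z}_{2k}$, count $8$ triangles per coset, and note that on six vertices there are exactly $8$ triangles avoiding the three induced diagonals $\langle i,i+k\rangle$ --- namely the faces of $\partial\beta^3$. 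This is a perfectly valid alternative and in fact more self-contained, since it avoids both the Euler-characteristic computation and the implicit appeal to the classification of $\{3,4\}$-surfaces. One small point worth making explicit in your write-up: when you say ``the same bookkeeping yields a single $4$-cycle'', you should note that because $(\tfrac{2k}{3}:\tfrac{2k}{3}:\tfrac{2k}{3})$ has length $\tfrac{2k}{3}$, only \emph{one} triangle of this cycle contains $0$, contributing the single link edge $\langle\tfrac{2k}{3},\tfrac{4k}{3}\rangle$ rather than three coinciding copies.
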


\begin{proof}
	We prove that $S_{l,2k}$ is a surface by looking at the link of vertex $0$: 
	
	\begin{center}
		\includegraphics[width=0.38\textwidth]{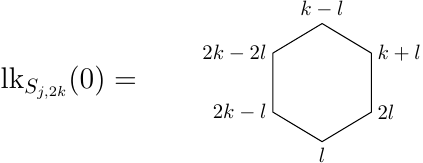}
	\end{center}
	
	where $2l = k-l$ and $2k-2l = k+l$ if and only if $l = \frac{k}{3}$. Thus, $\operatorname{lk}_{S_{l,j,2k}} (0)$ is either the boundary of a hexagon or, in the case $l = \frac{k}{3}$, the boundary of a quadrilateral and $S_{l,2k}$ is a surface.
	
	Furthermore, if  $l \neq \frac{k}{3}$ the surface $S_{l,2k}$ is a union of two difference cycles of full length and by Lemma \ref{lem:euler} we have $\chi (S_{l,2k}) = 0$. If $l = \frac{k}{3}$, $(\frac{k}{3} : \frac{k}{3} : \frac{k}{3})$ is of length $\frac{2k}{3}$ and it follows that
	$$ \chi (S_{\frac{k}{3},2k}) = 2k - \frac{8}{2}k + \frac{8}{3}k = \frac{2}{3} k.$$
	By a calculation analogue to the one in the proof of Lemma \ref{lem:typeA}, one obtains that $S_{\frac{k}{3},2k}$ consists of $\operatorname{gcd}(l,2k)=\frac{k}{3}$ isomorphic connected components of type $\{ 3,4 \}$. Hence, $S_{\frac{k}{3},2k}$ is a disjoint union of $\frac{k}{3}$ copies of $\partial \beta^3 $.
\end{proof}

Subsection 3.1 of \cite{Lutz09EquivdCovTrigsSurf} by Lutz contains a series of transitive tori and Klein bottles with $n = 8 + 2m$ vertices, $m \geq 0$. The series is given by
\begin{equation*}
	\label{thm:lutz}
	A_6 (n) := \{ ( 1 : 1 : (n-2)), (2 : (\frac{n}{2} -1) : (\frac{n}{2} -1) \}.
\end{equation*}
$A_6 (n)$ is a torus for $m$ even and a Klein bottle for $m$ odd.

\begin{lemma}
	\label{lem:gcd}
	Let $k \geq 3$, $1 \leq l \leq \lfloor \frac{k-1}{2} \rfloor $, $l \neq \frac{k}{3}$ and $n:= \operatorname{gcd}(l,k)$. Then $S_{l,2k}$ is isomorphic to $n$ copies of $A_6 ( \frac{2k}{n} )$.
\end{lemma}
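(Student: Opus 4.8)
The plan is to reduce the statement to the coprime case $\operatorname{gcd}(l,k)=1$ and there exhibit an explicit multiplier carrying $S_{l,2k}$ onto $A_6(2k)$. First I would determine the connected components of $S_{l,2k}$, arguing exactly as in the proof of Lemma~\ref{lem:typeA}. The star of vertex $0$ meets both difference cycles, and the edges emanating from $0$ realise the differences $l,\,2l,\,k-l$ and $2(k-l)$. Hence the component of $0$ is the subgroup of $\mathbb{Z}_{2k}$ generated by these values, namely
$$ \langle \operatorname{gcd}(l,2l,k-l,2(k-l),2k) \rangle = \langle \operatorname{gcd}(l,k) \rangle = \langle n \rangle, $$
a cyclic group of order $\frac{2k}{n}$. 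Consequently $S_{l,2k}$ has exactly $n$ components, one on each coset of $\langle n\rangle$, and a shift of the vertex labels shows them to be pairwise combinatorially isomorphic, so it suffices to identify the component supported on $\langle n\rangle$.

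Next I would relabel that component by dividing every vertex label by $n$, i.e.\ via the group isomorphism $\langle n\rangle \to \mathbb{Z}_{2k/n}$, $jn\mapsto j$. Writing $l'=l/n$ and $k'=k/n$ (both integers, since $n\mid l$ and $n\mid k$), the triangles $\langle jn,jn+l,jn+2l\rangle$ and their counterparts from the second cycle map precisely to the triangles of
$$ S_{l',2k'} = \{ (l':l':2(k'-l')),\ (k'-l':k'-l':2l') \}, $$
now with $\operatorname{gcd}(l',k')=1$. The hypothesis $l\neq\frac{k}{3}$ forces $l'\neq\frac{k'}{3}$, and $2l\le k-1$ gives $l'\le\lfloor\frac{k'-1}{2}\rfloor$, so the reduced data satisfies the hypotheses of the lemma (and in particular $\frac{2k}{n}=2k'\ge 8$, so $A_6(\frac{2k}{n})$ is defined). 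Thus the full claim follows once the coprime case is settled, with $A_6(\frac{2k}{n})=A_6(2k')$ as the target.

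For the coprime case I would produce a multiplier in the sense of Definition~\ref{def:diffCycles}. The complex $S_{l,2k}$ is invariant under the interchange $l\leftrightarrow k-l$ (this swaps its two difference cycles), and since $\operatorname{gcd}(l,k)=1$ and $l+(k-l)=k$, at least one of $l,\,k-l$ is odd; I may therefore assume $l$ is odd, so that $\operatorname{gcd}(l,2k)=1$ and $\lambda:=l^{-1}\in\mathbb{Z}_{2k}^{\times}$ exists and is itself odd. Multiplication by $\lambda$ sends $(a:b:c)$ to $(\lambda a:\lambda b:\lambda c)$. Using $\lambda l\equiv 1$ and $\lambda k\equiv k \pmod{2k}$ (the latter because $\lambda-1$ is even), a short computation gives
$$ \lambda\cdot(l:l:2(k-l)) = (1:1:2k-2), \qquad \lambda\cdot(k-l:k-l:2l) = (k-1:k-1:2), $$
and the right-hand cycles are exactly those of $A_6(2k)$, the second rewritten cyclically as $(2:(k-1):(k-1))$. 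By the remark following Definition~\ref{def:diffCycles} the multiplier is a combinatorial isomorphism, whence $S_{l,2k}\cong A_6(2k)$.

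I expect the congruence bookkeeping tied to the parity of $l$ to be the only delicate point. The equality $\lambda(k-l)\equiv k-1$ needed to hit the second cycle of $A_6$ holds precisely because $\lambda$ is odd, and the availability of $\lambda$ at all rests on there being an odd unit among $\{l,k-l\}$; the symmetry of $S_{l,2k}$ is what lets me always arrange this. Everything else — the component count, the division-by-$n$ relabeling, and the bound $\frac{2k}{n}\ge 8$ — is routine and parallels Lemmata~\ref{lem:typeA} and~\ref{lem:typeB}.
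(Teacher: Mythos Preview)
Your argument is correct and follows essentially the same route as the paper's proof: identify the $n=\gcd(l,k)$ connected components (the paper does this via the monomorphism $\mathbb{Z}_{2k/n}\to\mathbb{Z}_{2k}$, $j\mapsto lj$, you via division by $n$), reduce to the coprime situation $S_{l',2k'}$, and then use a unit multiplier to carry $S_{l',2k'}$ onto $A_6(2k')$. The paper phrases the key coprimality observation as ``either $\frac{l}{n}\in\mathbb{Z}_{2k/n}^{\times}$ or $\frac{k-l}{n}\in\mathbb{Z}_{2k/n}^{\times}$'', which is exactly your parity argument that one of $l',\,k'-l'$ is odd; your explicit verification that $\lambda$ is odd and hence $\lambda k\equiv k\pmod{2k}$ makes the ``small computation'' in the paper precise.
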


\begin{proof}
	Since $n=\operatorname{gcd}(l,k)=\operatorname{gcd}(l,k-l)$, we have $n = \min \{ \operatorname{gcd}(l,2(k-l)), \operatorname{gcd}(2l,k-l)\}$ and either $\frac{l}{n} \in \mathbb{Z}_{\frac{2k}{n}}^{\times}$ or $\frac{k-l}{n} \in \mathbb{Z}_{\frac{2k}{n}}^{\times}$ holds. It follows by mulitplying with $l$ or $k-l$ that $A_6 (\frac{2k}{n})$ is isomorphic to $S_{\frac{l}{n},\frac{2k}{n}} = S_{\frac{k-l}{n},\frac{2k}{n}}$. The monomorphism
	
	$$ \mathbb{Z}_{\frac{2k}{n}} \to \mathbb{Z}_{2k} \quad j \mapsto (lj \! \mod 2k) $$
	
	represents a relabeling of $S_{\frac{l}{n},\frac{2k}{n}}$ and a small computation shows that the relabeled complex is equal to the connected component of $S_{l,2k}$ containing $0$. By a shift of the vertex labels we see that all other connected components of $S_{l,2k}$ are isomoprhic to the one containing $0$ what states the result. 
\end{proof}

Let us now come to the proof of Theorem \ref{thm:main}.

\begin{proof}
	Lemma \ref{lem:partBetaK} and Lemma \ref{lem:BSeriesPartition} describe $\operatorname{skel}_2 (\beta^k)$ in terms of $2$ series of pairs of difference cycles 
	$$ \{(l : j : 2k-l-j) , (l : 2k-l-j : j)\} \textrm{ and } \{(l : l : 2(k-l)) , (k-l : k-l : 2l)\}$$
	for certain parameters $j$ and $l$. Lemma \ref{lem:typeA} determines the topological type of the first and Theorem \ref{thm:lutz} together with Lemma \ref{lem:typeB} and \ref{lem:gcd} determines the type of the second series.
	
	Since $|\operatorname{skel}_2 (\beta^k)| = { 2k \choose 3} - k(2k-2)$ and
	for $k \not\equiv 0 \, (3)$ all surfaces have exactly $4k$ triangles, we get an overall number of $\frac{(k-1)(k-2)}{3}$ surfaces. If $k \equiv 0 \, (3)$, all surfaces but one have $4k$ triangles, the last one has $\frac{8k}{3}$ triangles. Altogether this implies that there are $\frac{k(k-3)}{3}$ surfaces of Euler characteristic $0$ and $\frac{k}{3}$ copies of $\partial \beta^3$.
\end{proof}

Table \ref{tab:listBeta} shows the decomposition of $\operatorname{skel}_2 (\beta^k)$ for $3 \leq k \leq 10$. The table was computed using the \texttt{GAP} package \texttt{simpcomp} \cite{simpcomp}. For a complete list of the decomposition for $k \leq 100$ see \cite{Spreer10SupplMatToBetaK}.

\section{The decomposition of $\operatorname{skel}_2 (\Delta^{k-1})$}
\label{sec:deltak}

First, note that $\operatorname{skel}_2 (\Delta^{k-1})$, $k \geq 3$, equals the set of all triangles on $k$ vertices. By looking at its vertex links we can see that in the case that $k$ is an even number the complex $ \{ (l:\frac{k}{2}-l:\frac{k}{2}) \}$ cannot be part of a triangulated surface for any $0 < l <  \frac{k}{2}$. Thus, the decomposition of $\operatorname{skel}_2 (\beta^{k})$ cannot be extended to a decomposition of $\operatorname{skel}_2 (\Delta^{2k-1})$ in an obvious manner. However, following Theorem \ref{thm:deltak}, in the case that $k$ is neither even nor divisible by $3$ the situation is different.

Again, we will first prove some lemma before we will come to the actual proof of the theorem.

\begin{lemma}
	\label{lem:mb}
	The complex $M_{l,k}$ with $k \geq 5$, $k \not \equiv 0 \, (3)$ and $k \not \equiv 0 \, (4)$ is a triangulation of $n:=\operatorname{gcd} (l,k)$ cylinders $[ 0,1 ] \times \partial \Delta^2$ if $\frac{k}{n}$ is even and of $n$ M\"obius strips if $\frac{k}{n}$ is odd.
\end{lemma}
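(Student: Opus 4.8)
The plan is to work directly with the single difference cycle $M_{l,k} = (l:l:k-2l) = \mathbb{Z}_k \cdot \langle 0, l, 2l\rangle$, whose triangles are exactly $T_i := \langle i, i+l, i+2l\rangle$ for $i \in \mathbb{Z}_k$. First I would record that, since $k \not\equiv 0 \,(3)$ forces $k \neq 3l$, the three entries $l, l, k-2l$ are not all equal, so Proposition \ref{prop:lengthOfDiffcycles} gives that $M_{l,k}$ has full length $k$; in particular it consists of $k$ distinct triangles on $k$ vertices.

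Next I would show that $M_{l,k}$ is a surface with boundary by computing a vertex link. The triangles containing $0$ are $T_0$, $T_{k-l}$ and $T_{k-2l}$, whose edges opposite to $0$ are $\{l,2l\}$, $\{l,k-l\}$ and $\{k-2l,k-l\}$; these assemble into the path $2l - l - (k-l) - (k-2l)$. The four vertices $l, 2l, k-l, k-2l$ are pairwise distinct, since any collision would force $2l=k$, $3l=k$ or $4l=k$, excluded respectively by $l \le \lfloor (k-1)/2\rfloor$, by $k \not\equiv 0\,(3)$ and by $k \not\equiv 0\,(4)$. Hence every link is an arc, so $M_{l,k}$ is a compact surface with (non-empty) boundary, every vertex lying on the boundary.

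I would then separate $M_{l,k}$ into connected components and identify each. Two triangles share an edge precisely when their indices differ by $\pm l$, the shared edge being a ``gap-$l$'' edge $\langle i, i+l\rangle$, whereas every ``gap-$2l$'' edge $\langle i, i+2l\rangle$ lies in the single triangle $T_i$ and is therefore a boundary edge. Thus the adjacency graph on triangles is the Cayley graph of $\langle l\rangle \le \mathbb{Z}_k$, producing exactly $n = \gcd(l,k)$ components, each a cyclic strip of $m := k/n$ triangles on the $m$ vertices of a coset of $\langle l\rangle$; shifting all labels by $1$ permutes these cosets cyclically, so all $n$ components are combinatorially isomorphic. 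A per-component count ($m$ vertices, $m$ interior gap-$l$ edges, $m$ boundary gap-$2l$ edges, $m$ triangles) yields $\chi = m - 2m + m = 0$. By the classification of compact surfaces with non-empty boundary, a connected such surface with $\chi = 0$ is either a cylinder (orientable) or a M\"obius strip (non-orientable).

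The decisive step, which I expect to be the main obstacle, is orientability, which I would settle by traversing the cyclic strip. Orienting $T_{jl}$ as $\langle jl, (j+1)l, (j+2)l\rangle$, consecutive triangles $T_{jl}$ and $T_{(j+1)l}$ induce the \emph{same} direction on their shared edge $\{(j+1)l,(j+2)l\}$; a coherent orientation therefore forces a sign flip at each of the $m$ steps, and closing the strip after $m$ steps is consistent if and only if $m$ is even. (Equivalently, the gap-$2l$ boundary edges join $jl$ to $(j+2)l$, so the boundary of a component consists of $\gcd(2,m)$ circles.) Hence each component is a cylinder when $m = k/n$ is even and a M\"obius strip when $m = k/n$ is odd, which is the claim. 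The care throughout lies in the non-degeneracy conditions $k\not\equiv 0\,(3)$, $k\not\equiv 0\,(4)$ and $l \le \lfloor(k-1)/2\rfloor$, which keep all occurring difference cycles at full length and all components isomorphic, so that the single parity $m \bmod 2$ governs every component simultaneously.
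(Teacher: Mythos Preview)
Your argument is correct and complete. It differs from the paper's proof mainly in that the paper first treats the base case $M_{1,k}$ explicitly---listing its triangles $\langle 0,1,2\rangle,\langle 1,2,3\rangle,\dots$, observing that each has exactly two neighbours, and deciding orientability via the alternating sum $+\langle 0,1,2\rangle-\langle 1,2,3\rangle+\cdots$---and then reduces the general $M_{l,k}$ to $n$ copies of $M_{1,k/n}$ by the relabeling (multiplier) isomorphism $j\mapsto lj$. You instead work with arbitrary $l$ throughout: you compute the vertex link to certify a surface with boundary, read off the $n=\gcd(l,k)$ components from the Cayley graph of $\langle l\rangle\le\mathbb{Z}_k$ on triangle indices, and determine orientability by the same alternating-sign traversal (your ``sign flip at each step'' is exactly the paper's alternating sum). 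The paper's reduction is shorter once one has the multiplier idea from Section~\ref{sec:cyclic}, and it makes the isomorphism of the components to $M_{1,k/n}$ explicit; your direct route is more self-contained and exposes the local geometry (link shape, interior vs.\ boundary edges, boundary-circle count $\gcd(2,m)$) without appealing to a change of labels. Either way the non-degeneracy conditions $k\not\equiv 0\,(3)$, $k\not\equiv 0\,(4)$ and $l\le\lfloor(k-1)/2\rfloor$ play the same role: they guarantee $k/n\ge 5$ (equivalently, that the four link vertices $l,2l,k-l,k-2l$ are distinct), so that each component really is a strip rather than a degenerate complex.
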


\begin{proof}
	We first look at 
	$$ M_{1,k} = \{ \langle 0,1,2 \rangle , \langle 1,2,3 \rangle , \ldots , \langle k-2,k-1,0 \rangle , \langle k-1,0,1 \rangle \} $$
	for $k \geq 5$ (see Figure \ref{fig:mb}). Every triangle has exactly two neighbors. Thus, the alternating sum
	$$ + \langle 0,1,2 \rangle - \langle 1,2,3 \rangle + \ldots - + (-1)^{k-1} \langle k-1,0,1 \rangle$$
	induces an orientation if and only if $k$ is even and for any $l \in \mathbb{Z}^{\times}_k$ the complex $M_{l,k}$ is a cylinder if $k$ is even and a M\"obius strip if $k$ is odd. Now suppose that $n=\operatorname{gcd} (l,k) > 1$. Since $k \not \equiv 0 \, (3)$ and $k \not \equiv 0 \, (4)$ we have $\frac{k}{n} \geq 5$ and by a relabeling we see that the connected components of $M_{l,k}$ are combinatorially isomorphic to $M_{\frac{l}{n},\frac{k}{n}} \cong M_{1,\frac{k}{n}}$.	
\end{proof}

\begin{figure}[h]
	\centering
	\includegraphics[width=0.4\textwidth]{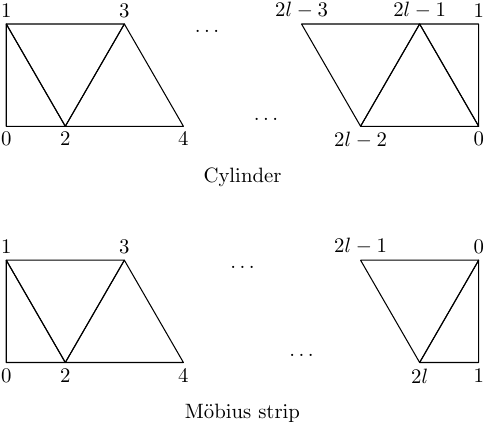}
	\caption{The cylinder $(1:1:2l-2)$ and the M\"obius strip $(1:1:2l-3)$. The vertical boundary components ($\langle 0,1 \rangle$)are identified.}
	\label{fig:mb}
\end{figure}

\begin{remark}
	If $k \equiv 0 \, (4)$ the connected components of  $M_{\frac{k}{4},k} = \{ ( \frac{k}{4} : \frac{k}{4} : \frac{k}{2} ) \}$ are equivalent to $\{ ( 1 : 1 : 2 ) \}$, the boundary of $\Delta^3$. If $k \equiv 0 \, (3)$, then $M_{\frac{k}{3},k}$ is a collection of disjoint triangles.
\end{remark}

\begin{lemma}
	\label{lem:tori}
	The complex $S_{l,j,k}$, $0 < l < j < k$, $k \not \equiv 0 \, (2)$, is a triangulation of $m:=\operatorname{gcd} (l,j,k)$ connected components of isomorphic tori on $\frac{k}{m}$ vertices.
\end{lemma}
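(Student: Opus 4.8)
The plan is to transcribe the proof of Lemma~\ref{lem:typeA} to the $k$-vertex (odd) setting, isolating the single place where the parity of $k$ is actually used. First I would show that $S_{l,j,k}$ is a closed surface by computing the link of the vertex~$0$. The triangles of $(l:j:k-l-j)$ through $0$ are $\langle 0,l,l+j\rangle$ together with its cyclic shifts by $-l$ and $-(l+j)$, contributing the link edges $\langle l,l+j\rangle$, $\langle k-l,j\rangle$ and $\langle k-l-j,k-j\rangle$; the triangles of $(l:k-l-j:j)$ through $0$ are $\langle 0,l,k-j\rangle$ and its shifts, contributing $\langle l,k-j\rangle$, $\langle k-l,k-l-j\rangle$ and $\langle j,l+j\rangle$. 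I would then read off that these six edges close up into the single hexagon
$$ l \,-\, (l+j) \,-\, j \,-\, (k-l) \,-\, (k-l-j) \,-\, (k-j) \,-\, l, $$
so $\operatorname{lk}(0)$ is a circle; by vertex transitivity of the $\mathbb{Z}_k$-action every link is a hexagon and $S_{l,j,k}$ is a closed surface.

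The only place where $k\not\equiv 0\,(2)$ is needed is in verifying that the six vertices $l,l+j,j,k-l,k-l-j,k-j$ are pairwise distinct. Working in the range $l<j<k-l-j$ of Theorem~\ref{thm:deltak}, every possible coincidence other than $l+j=k-l-j$ forces one of $2l=k$, $2j=k$, $2l+j=k$ or $l+2j=k$, each contradicting $l<j<k-l-j$; the remaining coincidence $l+j=k-l-j$ forces $2(l+j)=k$, which is impossible for odd~$k$. (For even $k$ this is exactly the degenerate complex $(l:\tfrac{k}{2}-l:\tfrac{k}{2})$ noted before Lemma~\ref{lem:mb}, which cannot lie in a surface.) I expect this degeneracy bookkeeping to be the main obstacle, since everything else is a routine copy of Lemma~\ref{lem:typeA}; in particular the nominal hypothesis $0<l<j<k$ must be read together with $l+j<k$ and the strict ordering $l<j<k-l-j$ so that the two difference cycles stay distinct.

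Next I would compute the Euler characteristic and the orientation. As $l<j$, the entries $l,j,k-l-j$ are not all equal, so by Proposition~\ref{prop:lengthOfDiffcycles} both difference cycles have full length~$k$, and Lemma~\ref{lem:euler} with $m=2$, $n=k$ gives $\chi(S_{l,j,k})=0$. For orientability I would repeat the integral boundary computation of Lemma~\ref{lem:typeA},
\begin{eqnarray}
\partial(l:j:k-l-j) &=& (j:k-j)-(l+j:k-l-j)+(l:k-l), \nonumber \\
\partial(l:k-l-j:j) &=& (k-l-j:l+j)-(k-j:j)+(l:k-l), \nonumber
\end{eqnarray}
and use $(a:b)=-(b:a)$ to see that the second line equals the first. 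Hence $(l:j:k-l-j)-(l:k-l-j:j)$ is an integral $2$-cycle carrying coefficient $\pm1$ on every triangle, i.e.\ a fundamental cycle, so $S_{l,j,k}$ is orientable. Note that this argument is insensitive to the parity of $k$; the oddness was spent entirely on the surface property.

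Finally I would count components as in Lemma~\ref{lem:typeA}: two cyclic shifts of $\langle 0,l,l+j\rangle$ meet iff the shift lies in $\{0,\pm l,\pm j,\pm(l+j)\}$, so the component of $0$ has vertex set the subgroup $\langle l,j\rangle=\langle\operatorname{gcd}(l,j,k)\rangle$ of order $k/m$; since $\operatorname{star}(0)$ already uses triangles of both difference cycles, the two cycles glue into the same components, giving $m=\operatorname{gcd}(l,j,k)$ pieces on $k/m$ vertices, all isomorphic by an index shift. Restricting the fundamental $2$-cycle to one piece shows each component is orientable, and as the $m$ isomorphic pieces share $\chi=0$ equally each has $\chi=0$; a connected, closed, orientable surface of Euler characteristic $0$ is a torus, which completes the argument.
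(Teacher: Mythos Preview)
Your proposal is correct and follows exactly the route the paper takes: the paper's own proof computes the hexagonal link of $0$, invokes Lemma~\ref{lem:euler} for $\chi=0$, and then simply says that orientability and the component count are ``analogue to the one given in the proof of Lemma~\ref{lem:typeA}''. You have spelled out that analogy in full, including the case analysis showing where the oddness of $k$ is actually consumed (namely in ruling out $2(l+j)=k$); the paper leaves this implicit in the clause ``Since $0<l<j<k-l-j$ and $k\not\equiv 0\,(2)$ the link is the boundary of a hexagon''. Your remark that the hypothesis must be read with the stronger ordering $l<j<k-l-j$ from Theorem~\ref{thm:deltak} is also in line with the paper, which silently uses that ordering in its proof.
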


\begin{proof}
	The link of vertex $0$ equals 
	
	\begin{center}
		\includegraphics[width=0.38\textwidth]{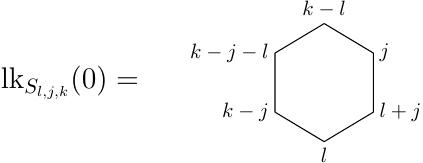}
	\end{center}
	
	(cf. proof of Lemma \ref{lem:typeA}). Since $0 < l < j < k-l-j$ and $k \not \equiv 0 \, (2)$ the link is the boundary of a hexagon, $\frac{k}{m} \geq 7$ and $S_{l,j,k}$ is a surface. By Lemma \ref{lem:euler}, the complex  $S_{l,j,k}$ is of Euler characteristic $0$. The proof of the orientability and the number of connected components is analogue to the one given in the proof of Lemma \ref{lem:typeA}. It follows that 
	$$ S_{l,j,k} \cong \{ 1, \ldots , m \} \times \mathbb{T}^2. $$
\end{proof}

Together with Lemma \ref{lem:mb} and Lemma \ref{lem:tori} in order to proof Theorem \ref{thm:deltak}, it suffices to show that the two series presented above contain all triangles of $\Delta^{k-1}$.

\begin{proof}
	Let $\langle a,b,c \rangle \in \operatorname{skel}_2 (\Delta^{k-1})$, $a<b<c$. Then $\langle a,b,c \rangle \in (b-a : c-b : k - (c-a) )$. Now if $b-a$, $c-b$ and $k-(c-a)$ are pairwise distinct, we have
	\small
	\begin{itemize}
		\item $\langle a,b,c \rangle \in S_{b-a,c-b,k} = S_{b-a,k-(c-a),k}$ if $b-a < c-b,k-(c-a)$,
		\item $\langle a,b,c \rangle \in S_{c-b,b-a,k} = S_{c-b,k-(c-a),k}$ if $c-b < b-a,k-(c-a)$ or
		\item $\langle a,b,c \rangle \in S_{k-(c-a),b-a,k} = S_{k-(c-a),c-b,k}$ if $k-(c-a) < c-b,b-a$.
	\end{itemize}
	\normalsize
	If on the other hand at least two of the entries are equal, then $(b-a : c-b : k - (c-a) ) = (l : l : k-2l )$ for $1 \leq l \leq \frac{k-1}{2}$. Thus, the union of all M\"obius strips $M_{l,k}$ and collections of tori $S_{l,j,k}$ equals the full $2$-skeleton of the $k$-simplex $\operatorname{skel}_2 (\Delta^{k-1})$.
	
	Table \ref{tab:listDelta} shows the decomposition of $\operatorname{skel}_2 (\Delta^{k-1})$ for $k \in \{ 5,7,11,13,35 \}$. For a complete list of the decomposition of $\operatorname{skel}_2 (\beta^k)$ and $\operatorname{skel}_2 (\Delta^{k-1})$ for $k \leq 100$ see \cite{Spreer10SupplMatToBetaK}.
\end{proof}

\bigskip
\scriptsize
\begin{center}
	\begin{longtable}{|l|l|l|l@{}l@{}l|}
		\caption{The decomposition of the $2$-skeleton of $\beta^k$ ($k \leq 10$) into transitive surfaces.} \label{tab:listBeta} \\
		
		\hline
		\multicolumn{1}{|l|}{$k$} & \multicolumn{1}{l|}{$f_2 (\beta^k)$} & \multicolumn{1}{l|}{\textbf{topological type}} & \multicolumn{3}{l|}{\textbf{difference cycles}} \\ \hline 
		\endfirsthead

		\multicolumn{6}{l}%
		{ \tablename\ \thetable{} -- continued from previous page} \\
		\hline \multicolumn{1}{|l|}{$k$} & \multicolumn{1}{l|}{$f_2 (\beta^k)$} & \multicolumn{1}{l|}{\textbf{topological type}} & \multicolumn{3}{l|}{\textbf{difference cycles}} \\ \hline 
		\endhead

		\hline \multicolumn{6}{r}{{continued on next page --}} \\
		\endfoot

		\hline \hline
		\endlastfoot
$3$	&	$8$&$\mathbb{S}^2$&$\{(1\!:\!1\!:\!4),(2\!:\!2\!:\!2)\}$&& \\
\hline
\hline
$4$&$32$&$\mathbb{T}^2$&$\{(1\!:\!2\!:\!5),(1\!:\!5\!:\!2)\},$&$\{(1\!:\!1\!:\!6),(3\!:\!3\!:\!2)\}$& \\
\hline
\hline
$5$&$80$&$\mathbb{T}^2$&$\{(1\!:\!2\!:\!7),(1\!:\!7\!:\!2)\},$&$\{(1\!:\!3\!:\!6),(1\!:\!6\!:\!3)\}$& \\
\hline
&&$\mathbb{K}^2$&$\{(1\!:\!1\!:\!8),(4\!:\!4\!:\!2)\},$&$\{(2\!:\!2\!:\!6),(3\!:\!3\!:\!4)\}$& \\
\hline
\hline
$6$	&	$160$&$\{ 1 , 2 \} \times \mathbb{S}^2$&$\{(2\!:\!2\!:\!8),(4\!:\!4\!:\!4)\}$&& \\
\hline
&&$\mathbb{T}^2$&$\{(1\!:\!2\!:\!9),(1\!:\!9\!:\!2)\},$&$\{(1\!:\!3\!:\!8),(1\!:\!8\!:\!3)\},$&$\{(1\!:\!4\!:\!7),(1\!:\!7\!:\!4)\},$\\
 &&&$\{(2\!:\!3\!:\!7),(2\!:\!7\!:\!3)\},$&$\{(3\!:\!4\!:\!5),(3\!:\!5\!:\!4)\},$&$\{(1\!:\!1\!:\!10),(5\!:\!5\!:\!2)\}$ \\
\hline
\hline
$7$&$280$&$\mathbb{T}^2$&$\{(1\!:\!2\!:\!11),(1\!:\!11\!:\!2)\},$&$\{(1\!:\!3\!:\!10),(1\!:\!10\!:\!3)\},$&$\{(1\!:\!4\!:\!9),(1\!:\!9\!:\!4)\},$\\
 &&&$\{(1\!:\!5\!:\!8),(1\!:\!8\!:\!5)\},$&$\{(2\!:\!3\!:\!9),(2\!:\!9\!:\!3)\},$&$\{(3\!:\!5\!:\!6),(3\!:\!6\!:\!5)\}$ \\
\hline
&&$\{ 1 , 2 \} \times \mathbb{T}^2$&$\{(2\!:\!4\!:\!8),(2\!:\!8\!:\!4)\}$&& \\
\hline
&&$\mathbb{K}^2$&$\{(1\!:\!1\!:\!12),(6\!:\!6\!:\!2)\},$&$\{(2\!:\!2\!:\!10),(5\!:\!5\!:\!4)\},$&$\{(3\!:\!3\!:\!8),(4\!:\!4\!:\!6)\}$ \\
\hline
\hline
$8$&$448$&$\mathbb{T}^2$&$\{(1\!:\!2\!:\!13),(1\!:\!13\!:\!2)\},$&$\{(1\!:\!3\!:\!12),(1\!:\!12\!:\!3)\},$&$\{(1\!:\!4\!:\!11),(1\!:\!11\!:\!4)\},$\\
 &&&$\{(1\!:\!5\!:\!10),(1\!:\!10\!:\!5)\},$&$\{(1\!:\!6\!:\!9),(1\!:\!9\!:\!6)\},$&$\{(2\!:\!3\!:\!11),(2\!:\!11\!:\!3)\},$\\
 &&&$\{(2\!:\!5\!:\!9),(2\!:\!9\!:\!5)\},$&$\{(3\!:\!4\!:\!9),(3\!:\!9\!:\!4)\},$&$\{(3\!:\!6\!:\!7),(3\!:\!7\!:\!6)\},$\\
 &&&$\{(4\!:\!5\!:\!7),(4\!:\!7\!:\!5)\},$&$\{(1\!:\!1\!:\!14),(7\!:\!7\!:\!2)\},$&$\{(3\!:\!3\!:\!10),(5\!:\!5\!:\!6)\}$ \\
\hline
&&$\{ 1 , 2 \} \times \mathbb{T}^2$&$\{(2\!:\!4\!:\!10),(2\!:\!10\!:\!4)\},$&$\{(2\!:\!2\!:\!12),(6\!:\!6\!:\!4)\}$& \\
\hline
\hline
$9$	&	$672$&$\{ 1 , 2, 3 \} \times \mathbb{S}^2$&$\{(3\!:\!3\!:\!12),(6\!:\!6\!:\!6)\}$&& \\
\hline
&&$\mathbb{T}^2$&$\{(1\!:\!2\!:\!15),(1\!:\!15\!:\!2)\},$&$\{(1\!:\!3\!:\!14),(1\!:\!14\!:\!3)\},$&$\{(1\!:\!4\!:\!13),(1\!:\!13\!:\!4)\},$\\
 &&&$\{(1\!:\!5\!:\!12),(1\!:\!12\!:\!5)\},$&$\{(1\!:\!6\!:\!11),(1\!:\!11\!:\!6)\},$&$\{(1\!:\!7\!:\!10),(1\!:\!10\!:\!7)\},$\\
 &&&$\{(2\!:\!3\!:\!13),(2\!:\!13\!:\!3)\},$&$\{(2\!:\!5\!:\!11),(2\!:\!11\!:\!5)\},$&$\{(3\!:\!4\!:\!11),(3\!:\!11\!:\!4)\},$\\
 &&&$\{(3\!:\!5\!:\!10),(3\!:\!10\!:\!5)\},$&$\{(3\!:\!7\!:\!8),(3\!:\!8\!:\!7)\},$&$\{(5\!:\!6\!:\!7),(5\!:\!7\!:\!6)\}$ \\
\hline
&&$\{ 1 , 2 \} \times \mathbb{T}^2$&$\{(2\!:\!4\!:\!12),(2\!:\!12\!:\!4)\},$&$\{(2\!:\!6\!:\!10),(2\!:\!10\!:\!6)\},$&$\{(4\!:\!6\!:\!8),(4\!:\!8\!:\!6)\}$ \\
\hline
&&$\mathbb{K}^2$&$\{(1\!:\!1\!:\!16),(8\!:\!8\!:\!2)\},$&$\{(2\!:\!2\!:\!14),(7\!:\!7\!:\!4)\},$&$\{(4\!:\!4\!:\!10),(5\!:\!5\!:\!8)\}$ \\
\hline
\hline
$10$&$960$&$\mathbb{T}^2$&$\{(1\!:\!2\!:\!17),(1\!:\!17\!:\!2)\},$&$\{(1\!:\!3\!:\!16),(1\!:\!16\!:\!3)\},$&$\{(1\!:\!4\!:\!15),(1\!:\!15\!:\!4)\},$\\
 &&&$\{(1\!:\!5\!:\!14),(1\!:\!14\!:\!5)\},$&$\{(1\!:\!6\!:\!13),(1\!:\!13\!:\!6)\},$&$\{(1\!:\!7\!:\!12),(1\!:\!12\!:\!7)\},$\\
 &&&$\{(1\!:\!8\!:\!11),(1\!:\!11\!:\!8)\},$&$\{(2\!:\!3\!:\!15),(2\!:\!15\!:\!3)\},$&$\{(2\!:\!5\!:\!13),(2\!:\!13\!:\!5)\},$\\
 &&&$\{(2\!:\!7\!:\!11),(2\!:\!11\!:\!7)\},$&$\{(3\!:\!4\!:\!13),(3\!:\!13\!:\!4)\},$&$\{(3\!:\!5\!:\!12),(3\!:\!12\!:\!5)\},$\\
 &&&$\{(3\!:\!6\!:\!11),(3\!:\!11\!:\!6)\},$&$\{(3\!:\!8\!:\!9),(3\!:\!9\!:\!8)\},$&$\{(4\!:\!5\!:\!11),(4\!:\!11\!:\!5)\},$\\
 &&&$\{(4\!:\!7\!:\!9),(4\!:\!9\!:\!7)\},$&$\{(5\!:\!6\!:\!9),(5\!:\!9\!:\!6)\},$&$\{(5\!:\!7\!:\!8),(5\!:\!8\!:\!7)\},$\\
 &&&$\{(1\!:\!1\!:\!18),(9\!:\!9\!:\!2)\},$&$\{(3\!:\!3\!:\!14),(7\!:\!7\!:\!6)\}$& \\
\hline
&&$\{ 1 , 2 \} \times \mathbb{T}^2$&$\{(2\!:\!4\!:\!14),(2\!:\!14\!:\!4)\},$&$\{(2\!:\!6\!:\!12),(2\!:\!12\!:\!6)\}$& \\
\hline
&&$\{ 1 , 2 \} \times \mathbb{K}^2$&$\{(2\!:\!2\!:\!16),(8\!:\!8\!:\!4)\},$&$\{(4\!:\!4\!:\!12),(6\!:\!6\!:\!8)\}$& \\
	\end{longtable}
\end{center}

\pagebreak
\scriptsize
\begin{center}
	\begin{longtable}{|l|l|l@{}l@{}l|}
		\caption{The decomposition of the $2$-skeleton of $\Delta^{k-1}$ ($k \in \{ 5,7,11,13,35 \} $) by topological types.} \label{tab:listDelta} \\
		
		\hline \multicolumn{1}{|l|}{$k$} & \multicolumn{1}{l|}{\textbf{topological type}} & \multicolumn{3}{l|}{\textbf{difference cycles}} \\ \hline 
		\endfirsthead

		\multicolumn{5}{l}%
		{ \tablename\ \thetable{} -- continued from previous page} \\
		\hline \multicolumn{1}{|l|}{$k$} & \multicolumn{1}{l|}{\textbf{topological type}} & \multicolumn{3}{l|}{\textbf{difference cycles}} \\ \hline 
		\endhead

		\hline \multicolumn{5}{r}{{continued on next page --}} \\
		\endfoot

		\hline \hline
		\endlastfoot
$5$&$\mathbb{M}^2$&$\{ ( 1\!:\!1\!:\!3 ) \},$&$\{ ( 2\!:\!2\!:\!1 ) \}$&\\ 
\hline 
\hline 
$7$&$\mathbb{M}^2$&$\{ ( 1\!:\!1\!:\!5 ) \},$&$\{ ( 2\!:\!2\!:\!3 ) \},$&$\{ ( 3\!:\!3\!:\!1 ) \}$\\ 
\hline 
&$\mathbb{T}^2$&$\{ ( 1\!:\!2\!:\!4 ), ( 1\!:\!4\!:\!2 ) \}$&&\\ 
\hline 
\hline 
$11$&$\mathbb{M}^2$&$\{ ( 1\!:\!1\!:\!9 ) \},$&$\{ ( 2\!:\!2\!:\!7 ) \},$&$\{ ( 3\!:\!3\!:\!5 ) \},$\\ 
&&$\{ ( 4\!:\!4\!:\!3 ) \},$&$\{ ( 5\!:\!5\!:\!1 ) \}$&\\ 
\hline 
&$\mathbb{T}^2$&$\{ ( 1\!:\!2\!:\!8 ), ( 1\!:\!8\!:\!2 ) \},$&$\{ ( 1\!:\!3\!:\!7 ), ( 1\!:\!7\!:\!3 ) \},$&$\{ ( 1\!:\!4\!:\!6 ), ( 1\!:\!6\!:\!4 ) \},$\\ 
&&$\{ ( 2\!:\!3\!:\!6 ), ( 2\!:\!6\!:\!3 ) \},$&$\{ ( 2\!:\!4\!:\!5 ), ( 2\!:\!5\!:\!4 ) \}$&\\ 
\hline 
\hline 
$13$&$\mathbb{M}^2$&$\{ ( 1\!:\!1\!:\!11 ) \},$&$\{ ( 2\!:\!2\!:\!9 ) \},$&$\{ ( 3\!:\!3\!:\!7 ) \},$\\ 
&&$\{ ( 4\!:\!4\!:\!5 ) \},$&$\{ ( 5\!:\!5\!:\!3 ) \},$&$\{ ( 6\!:\!6\!:\!1 ) \}$\\ 
\hline 
&$\mathbb{T}^2$&$\{ ( 1\!:\!2\!:\!10 ), ( 1\!:\!10\!:\!2 ) \},$&$\{ ( 1\!:\!3\!:\!9 ), ( 1\!:\!9\!:\!3 ) \},$&$\{ ( 1\!:\!4\!:\!8 ), ( 1\!:\!8\!:\!4 ) \},$\\ 
&&$\{ ( 1\!:\!5\!:\!7 ), ( 1\!:\!7\!:\!5 ) \},$&$\{ ( 2\!:\!3\!:\!8 ), ( 2\!:\!8\!:\!3 ) \},$&$\{ ( 2\!:\!4\!:\!7 ), ( 2\!:\!7\!:\!4 ) \},$\\ 
&&$\{ ( 2\!:\!5\!:\!6 ), ( 2\!:\!6\!:\!5 ) \},$&$\{ ( 3\!:\!4\!:\!6 ), ( 3\!:\!6\!:\!4 ) \}$&\\ 
\hline 
\hline 
$35$&$\mathbb{M}^2$&$\{ ( 1\!:\!1\!:\!33 ) \},$&$\{ ( 2\!:\!2\!:\!31 ) \},$&$\{ ( 3\!:\!3\!:\!29 ) \},$\\ 
&&$\{ ( 4\!:\!4\!:\!27 ) \},$&$\{ ( 6\!:\!6\!:\!23 ) \},$&$\{ ( 8\!:\!8\!:\!19 ) \},$\\ 
&&$\{ ( 9\!:\!9\!:\!17 ) \},$&$\{ ( 11\!:\!11\!:\!13 ) \},$&$\{ ( 12\!:\!12\!:\!11 ) \},$\\ 
&&$\{ ( 13\!:\!13\!:\!9 ) \},$&$\{ ( 16\!:\!16\!:\!3 ) \},$&$\{ ( 17\!:\!17\!:\!1 ) \}$\\ 
\hline 
&$\{ 1 , \ldots , 5 \} \times \mathbb{M}^2$&$\{ ( 5\!:\!5\!:\!25 ) \},$&$\{ ( 10\!:\!10\!:\!15 ) \},$&$\{ ( 15\!:\!15\!:\!5 ) \}$\\ 
\hline 
&$\{ 1 , \ldots , 7 \} \times \mathbb{M}^2$&$\{ ( 7\!:\!7\!:\!21 ) \},$&$\{ ( 14\!:\!14\!:\!7 ) \}$&\\ 
\hline 
&$\mathbb{T}^2$&$\{ ( 1\!:\!2\!:\!32 ), ( 1\!:\!32\!:\!2 ) \},$&$\{ ( 1\!:\!3\!:\!31 ), ( 1\!:\!31\!:\!3 ) \},$&$\{ ( 1\!:\!4\!:\!30 ), ( 1\!:\!30\!:\!4 ) \},$\\ 
&&$\{ ( 1\!:\!5\!:\!29 ), ( 1\!:\!29\!:\!5 ) \},$&$\{ ( 1\!:\!6\!:\!28 ), ( 1\!:\!28\!:\!6 ) \},$&$\{ ( 1\!:\!7\!:\!27 ), ( 1\!:\!27\!:\!7 ) \},$\\ 
&&$\{ ( 1\!:\!8\!:\!26 ), ( 1\!:\!26\!:\!8 ) \},$&$\{ ( 1\!:\!9\!:\!25 ), ( 1\!:\!25\!:\!9 ) \},$&$\{ ( 1\!:\!10\!:\!24 ), ( 1\!:\!24\!:\!10 ) \},$\\ 
&&$\{ ( 1\!:\!11\!:\!23 ), ( 1\!:\!23\!:\!11 ) \},$&$\{ ( 1\!:\!12\!:\!22 ), ( 1\!:\!22\!:\!12 ) \},$&$\{ ( 1\!:\!13\!:\!21 ), ( 1\!:\!21\!:\!13 ) \},$\\ 
&&$\{ ( 1\!:\!14\!:\!20 ), ( 1\!:\!20\!:\!14 ) \},$&$\{ ( 1\!:\!15\!:\!19 ), ( 1\!:\!19\!:\!15 ) \},$&$\{ ( 1\!:\!16\!:\!18 ), ( 1\!:\!18\!:\!16 ) \},$\\ 
&&$\{ ( 2\!:\!3\!:\!30 ), ( 2\!:\!30\!:\!3 ) \},$&$\{ ( 2\!:\!4\!:\!29 ), ( 2\!:\!29\!:\!4 ) \},$&$\{ ( 2\!:\!5\!:\!28 ), ( 2\!:\!28\!:\!5 ) \},$\\ 
&&$\{ ( 2\!:\!6\!:\!27 ), ( 2\!:\!27\!:\!6 ) \},$&$\{ ( 2\!:\!7\!:\!26 ), ( 2\!:\!26\!:\!7 ) \},$&$\{ ( 2\!:\!8\!:\!25 ), ( 2\!:\!25\!:\!8 ) \},$\\ 
&&$\{ ( 2\!:\!9\!:\!24 ), ( 2\!:\!24\!:\!9 ) \},$&$\{ ( 2\!:\!10\!:\!23 ), ( 2\!:\!23\!:\!10 ) \},$&$\{ ( 2\!:\!11\!:\!22 ), ( 2\!:\!22\!:\!11 ) \},$\\ 
&&$\{ ( 2\!:\!12\!:\!21 ), ( 2\!:\!21\!:\!12 ) \},$&$\{ ( 2\!:\!13\!:\!20 ), ( 2\!:\!20\!:\!13 ) \},$&$\{ ( 2\!:\!14\!:\!19 ), ( 2\!:\!19\!:\!14 ) \},$\\ 
&&$\{ ( 2\!:\!15\!:\!18 ), ( 2\!:\!18\!:\!15 ) \},$&$\{ ( 2\!:\!16\!:\!17 ), ( 2\!:\!17\!:\!16 ) \},$&$\{ ( 3\!:\!4\!:\!28 ), ( 3\!:\!28\!:\!4 ) \},$\\ 
&&$\{ ( 3\!:\!5\!:\!27 ), ( 3\!:\!27\!:\!5 ) \},$&$\{ ( 3\!:\!6\!:\!26 ), ( 3\!:\!26\!:\!6 ) \},$&$\{ ( 3\!:\!7\!:\!25 ), ( 3\!:\!25\!:\!7 ) \},$\\ 
&&$\{ ( 3\!:\!8\!:\!24 ), ( 3\!:\!24\!:\!8 ) \},$&$\{ ( 3\!:\!9\!:\!23 ), ( 3\!:\!23\!:\!9 ) \},$&$\{ ( 3\!:\!10\!:\!22 ), ( 3\!:\!22\!:\!10 ) \},$\\ 
&&$\{ ( 3\!:\!11\!:\!21 ), ( 3\!:\!21\!:\!11 ) \},$&$\{ ( 3\!:\!12\!:\!20 ), ( 3\!:\!20\!:\!12 ) \},$&$\{ ( 3\!:\!13\!:\!19 ), ( 3\!:\!19\!:\!13 ) \},$\\ 
&&$\{ ( 3\!:\!14\!:\!18 ), ( 3\!:\!18\!:\!14 ) \},$&$\{ ( 3\!:\!15\!:\!17 ), ( 3\!:\!17\!:\!15 ) \},$&$\{ ( 4\!:\!5\!:\!26 ), ( 4\!:\!26\!:\!5 ) \},$\\ 
&&$\{ ( 4\!:\!6\!:\!25 ), ( 4\!:\!25\!:\!6 ) \},$&$\{ ( 4\!:\!7\!:\!24 ), ( 4\!:\!24\!:\!7 ) \},$&$\{ ( 4\!:\!8\!:\!23 ), ( 4\!:\!23\!:\!8 ) \},$\\ 
&&$\{ ( 4\!:\!9\!:\!22 ), ( 4\!:\!22\!:\!9 ) \},$&$\{ ( 4\!:\!10\!:\!21 ), ( 4\!:\!21\!:\!10 ) \},$&$\{ ( 4\!:\!11\!:\!20 ), ( 4\!:\!20\!:\!11 ) \},$\\ 
&&$\{ ( 4\!:\!12\!:\!19 ), ( 4\!:\!19\!:\!12 ) \},$&$\{ ( 4\!:\!13\!:\!18 ), ( 4\!:\!18\!:\!13 ) \},$&$\{ ( 4\!:\!14\!:\!17 ), ( 4\!:\!17\!:\!14 ) \},$\\ 
&&$\{ ( 4\!:\!15\!:\!16 ), ( 4\!:\!16\!:\!15 ) \},$&$\{ ( 5\!:\!6\!:\!24 ), ( 5\!:\!24\!:\!6 ) \},$&$\{ ( 5\!:\!7\!:\!23 ), ( 5\!:\!23\!:\!7 ) \},$\\ 
&&$\{ ( 5\!:\!8\!:\!22 ), ( 5\!:\!22\!:\!8 ) \},$&$\{ ( 5\!:\!9\!:\!21 ), ( 5\!:\!21\!:\!9 ) \},$&$\{ ( 5\!:\!11\!:\!19 ), ( 5\!:\!19\!:\!11 ) \},$\\ 
&&$\{ ( 5\!:\!12\!:\!18 ), ( 5\!:\!18\!:\!12 ) \},$&$\{ ( 5\!:\!13\!:\!17 ), ( 5\!:\!17\!:\!13 ) \},$&$\{ ( 5\!:\!14\!:\!16 ), ( 5\!:\!16\!:\!14 ) \},$\\ 
&&$\{ ( 6\!:\!7\!:\!22 ), ( 6\!:\!22\!:\!7 ) \},$&$\{ ( 6\!:\!8\!:\!21 ), ( 6\!:\!21\!:\!8 ) \},$&$\{ ( 6\!:\!9\!:\!20 ), ( 6\!:\!20\!:\!9 ) \},$\\ 
&&$\{ ( 6\!:\!10\!:\!19 ), ( 6\!:\!19\!:\!10 ) \},$&$\{ ( 6\!:\!11\!:\!18 ), ( 6\!:\!18\!:\!11 ) \},$&$\{ ( 6\!:\!12\!:\!17 ), ( 6\!:\!17\!:\!12 ) \},$\\ 
&&$\{ ( 6\!:\!13\!:\!16 ), ( 6\!:\!16\!:\!13 ) \},$&$\{ ( 6\!:\!14\!:\!15 ), ( 6\!:\!15\!:\!14 ) \},$&$\{ ( 7\!:\!8\!:\!20 ), ( 7\!:\!20\!:\!8 ) \},$\\ 
&&$\{ ( 7\!:\!9\!:\!19 ), ( 7\!:\!19\!:\!9 ) \},$&$\{ ( 7\!:\!10\!:\!18 ), ( 7\!:\!18\!:\!10 ) \},$&$\{ ( 7\!:\!11\!:\!17 ), ( 7\!:\!17\!:\!11 ) \},$\\ 
&&$\{ ( 7\!:\!12\!:\!16 ), ( 7\!:\!16\!:\!12 ) \},$&$\{ ( 7\!:\!13\!:\!15 ), ( 7\!:\!15\!:\!13 ) \},$&$\{ ( 8\!:\!9\!:\!18 ), ( 8\!:\!18\!:\!9 ) \},$\\ 
&&$\{ ( 8\!:\!10\!:\!17 ), ( 8\!:\!17\!:\!10 ) \},$&$\{ ( 8\!:\!11\!:\!16 ), ( 8\!:\!16\!:\!11 ) \},$&$\{ ( 8\!:\!12\!:\!15 ), ( 8\!:\!15\!:\!12 ) \},$\\ 
&&$\{ ( 8\!:\!13\!:\!14 ), ( 8\!:\!14\!:\!13 ) \},$&$\{ ( 9\!:\!10\!:\!16 ), ( 9\!:\!16\!:\!10 ) \},$&$\{ ( 9\!:\!11\!:\!15 ), ( 9\!:\!15\!:\!11 ) \},$\\ 
&&$\{ ( 9\!:\!12\!:\!14 ), ( 9\!:\!14\!:\!12 ) \},$&$\{ ( 10\!:\!11\!:\!14 ), ( 10\!:\!14\!:\!11 ) \},$&$\{ ( 10\!:\!12\!:\!13 ), ( 10\!:\!13\!:\!12 ) \}$\\ 
\hline 
&$\{ 1 , \ldots , 5 \} \times \mathbb{T}^2$&$\{ ( 5\!:\!10\!:\!20 ), ( 5\!:\!20\!:\!10 ) \}$&&\\ 
		\end{longtable}
\end{center}
\pagebreak
\normalsize

\addcontentsline{toc}{chapter}{Bibliography}
{\footnotesize
 \bibliographystyle{abbrv}
 \bibliography{bibliography}

\begin{thebibliography}{10}

\bibitem{Altshuler71PolyhedralRealizationsTori}
A.~Altshuler.
\newblock {P}olyhedral realization in {$R^{3}$} of triangulations of the torus
  and {$2$}-manifolds in cyclic {$4$}-polytopes.
\newblock {\em Discrete Math.}, 1(3):211--238, 1971/1972.

\bibitem{Beth99DesignTheory}
T.~Beth, D.~Jungnickel, and H.~Lenz.
\newblock {\em {D}esign theory. {V}ol. {I}}, volume~69 of {\em Encyclopedia of
  Mathematics and its Applications}.
\newblock Cambridge University Press, Cambridge, second edition, 1999.

\bibitem{Betke76ZerlegSkel}
U.~Betke, C.~Schulz, and J.~M. Wills.
\newblock {Z}ur {Z}erlegbarkeit von {S}keletten.
\newblock {\em Geometriae Dedicata}, 5(4):435--451, 1976.

\bibitem{Effenberger08CentSymmS2S27}
F.~Effenberger and W.~K{\"u}hnel.
\newblock {A} centrally symmetric 16-vertex triangulation of $({S}^2\times
  {S}^2)^{\#7}$ as a {H}amiltonian subcomplex of the 8-octahedron.
\newblock {P}reprint, 5 pages with electronic geometry model, 2008.

\bibitem{simpcompISSAC}
F.~Effenberger and J.~Spreer.
\newblock simpcomp - a {GAP} toolbox for simplicial complexes.
\newblock {\em ACM Communications in Computer Algebra}, 44(4):186 -- 189, 2010.

\bibitem{simpcomp}
F.~Effenberger and J.~Spreer.
\newblock simpcomp - a {GAP} package, {V}ersion 1.5.4.
\newblock \url{http://www.igt.uni-stuttgart.de/LstDiffgeo/simpcomp}, 2011.

\bibitem{Gruenbaum03ConvPoly}
B.~Gr{\"u}nbaum.
\newblock {\em {C}onvex polytopes}, volume 221 of {\em Graduate Texts in
  Mathematics}.
\newblock Springer-Verlag, New York, second edition, 2003.
\newblock Prepared and with a preface by Volker Kaibel, Victor Klee and
  G{\"u}nter M.\ Ziegler.

\bibitem{Grunbaum76PairsEdgeDisjHamCircs}
B.~Gr{\"u}nbaum and J.~Malkevitch.
\newblock {P}airs of edge-disjoint {H}amiltonian circuits.
\newblock {\em Aequationes Math.}, 14(1/2):191--196, 1976.

\bibitem{Jockusch95InfFamNearNeighCentSymmS3}
W.~Jockusch.
\newblock {A}n infinite family of nearly neighborly centrally symmetric
  {$3$}-spheres.
\newblock {\em J. Combin. Theory Ser. A}, 72(2):318--321, 1995.

\bibitem{Kuhnel96CentrSymmTightSurf}
W.~K{\"u}hnel.
\newblock {C}entrally-symmetric tight surfaces and graph embeddings.
\newblock {\em Beitr{\"a}ge Algebra Geom.}, 37(2):347--354, 1996.

\bibitem{Kuehnel96PermDiffCyc}
W.~K{\"u}hnel and G.~Lassmann.
\newblock {P}ermuted difference cycles and triangulated sphere bundles.
\newblock {\em Discrete Math.}, 162(1-3):215--227, 1996.

\bibitem{Lassmann00ClassCentSymmCycS2S2}
G.~Lassmann and E.~Sparla.
\newblock {A} classification of centrally-symmetric and cyclic {$12$}-vertex
  triangulations of {$S\sp 2\times S\sp 2$}.
\newblock {\em Discrete Math.}, 223(1-3):175--187, 2000.

\bibitem{Lutz09EquivdCovTrigsSurf}
F.~H. Lutz.
\newblock {E}quivelar and $d$-covered triangulations of surfaces. {II}.
  {C}yclic triangulations and tessellations.
\newblock {\tt arXiv:1001.2779v1 [math.CO]}, 2010.
\newblock {To appear in Contrib. Discr. Math.}

\bibitem{Martin76CyclesHamiltoniens}
P.~Martin.
\newblock {C}ycles hamiltoniens dans les graphes {$4$}-r\'eguliers
  {$4$}-connexes.
\newblock {\em Aequationes Math.}, 14(1/2):37--40, 1976.

\bibitem{Spreer10SupplMatToBetaK}
J.~Spreer.
\newblock {S}upplemental material to the article ``{P}artitioning the triangles
  of the cross polytope into surfaces.''.
\newblock {\tt arXiv:1009.2640v1 [math.CO]}, 2010.

\end{thebibliography}
}
\noindent
Institut f\"ur Geometrie und Topologie \\
Universit\"at Stuttgart \\
70550 Stuttgart \\
Germany
\end{document}